\theoremstyle{remark}{
\newtheorem{Def}{{\rm Definition}}
\newtheorem{Ex}{{\rm Example}}
\newtheorem{Rem}{{\rm Remark}}

\newtheorem{MainProb}{Main Problem}
}
\theoremstyle{plain}{

\newtheorem{Prop}{Proposition}

\newtheorem{MainThm}{Main Theorem}

\newtheorem{Fact}{Fact}
}
\begin{document}

\title[Simple polyhedra regarded as the Reeb spaces of stable fold maps]{Simple polyhedra homeomorphic to the Reeb spaces of stable fold maps}
\author{Naoki Kitazawa}
\keywords{Simple polyhedra. Reeb spaces. Polyhedra. Curves and surfaces. Immersions and embeddings. Morse functions and fold maps. \\
\indent {\it \textup{2020} Mathematics Subject Classification}: Primary~57Q05. Secondary~57Q35, 57R45.}
\address{Institute of Mathematics for Industry, Kyushu University, 744 Motooka, Nishi-ku Fukuoka 819-0395, Japan\\
 TEL (Office): +81-92-802-4402 \\
 FAX (Office): +81-92-802-4405 \\
}
\email{n-kitazawa@imi.kyushu-u.ac.jp}
\urladdr{https://naokikitazawa.github.io/NaokiKitazawa.html}
\maketitle
\begin{abstract}
{\it Simple polyhedra} are $2$-dimensional polyhedra and important objects in low-dimensional geometry
and in the applications of {\it fold} maps, defined as smooth maps regarded as higher dimensional variants of Morse functions. For example, they are locally so-called {\it Reeb spaces} of (so-called {\it stable}) fold maps into the plane and represent the manifolds compactly. The {\it Reeb space} of a {\it fold} map is the space of all connected components of preimages of it and is a polyhedron whose dimension is same as that of the manifold of the target.

Is a given simple polyhedron homeomorphic to the Reeb space of a suitable stable fold map? What are their global topologies like? Previously the author has challenged this for a specific case and presented fundamental construction and topological properties of the polyhedra as new results. The present paper extends some of these works and results and present results of new types.



\end{abstract}


\maketitle
\section{Introduction.}
\label{sec:1}

{\it Simple polyhedra} are $2$-dimensional polyhedra which play important roles in various fields of $3$ or $4$-dimensional geometry. \cite{ikeda} is one of pioneering studies related to this. Before introducing the notion, we introduce some fundamental terminologies and notions and some of notation.

For a set $X$, $\sharp X$ denotes the size or cardinality of $X$.
For a smooth manifold $X$ and a point $p \in X$, $T_pX$ denotes the tangent vector space at $p$.
For a smooth map $c:X \rightarrow Y$, ${dc}_p:T_pX \rightarrow T_{c(p)} Y$ denotes the differential of $c$ at $p$. For a smooth manifold $X$, $\dim X$ denotes its dimension.
 

\begin{Def}
\label{def:1}
A family $\{c_j:X_j \rightarrow Y\}_{j \in J}$ of finitely many smooth immersions each of which is a smooth immersion from a closed manifold into a manifold with no boundary is said to {\it have normal crossings only as crossings} if the following conditions are satisfied. We also define this notion for a single immersion naturally.

\begin{enumerate}

\item For each $q \in Y$, the union ${\bigcup}_{j \in J} {c_j}^{-1}(q)$ is a finite set.
\item Let ${c_j}^{-1}(q)$ be denoted by $J^{\prime}:=\{q_{j,j^{\prime}} \mid 1 \leq j^{\prime} \leq \sharp {c_j}^{-1}(q)\}$.  
${\rm rank}\ ({\bigcap}_{(j,j^{\prime}) \in J \times j^{\prime}}  {dc_j}_{q_{j,j^{\prime}}} (T_{q_{j,j^{\prime}}}X_j))+{\Sigma}_{j \in J}  (\dim Y-\dim X_j) \sharp {c_j}^{-1}(q)=\dim Y$.
\end{enumerate} 

\end{Def}

Hereafter, we need fundamental notions, properties, and principles on the PL category and the piecewise smooth category, which is regarded as a category equivalent to the PL category. See \cite{hudson} and see also \cite{bryant} for systematic expositions.

{\it Polyhedra} are objects in these categories and we can define their {\it PL structures} uniquely. They are said to be {\it PL homeomorphic} if there exists a homeomorphism which is a PL or piesewise smooth map. 
{\it PL} manifolds are polyhedra homeomorphic to topological manifolds whose PL structures satisfy some natural conditions.
We can define {\it PL} ({\it piesewise smooth}) {\it emmbeddings} and {\it subplolyhedra} for example. Smooth manifolds are PL manifolds in canonical ways. For a smooth manifold $X$, or more generally, a polyhedron $X$, $\dim X$ denotes its dimension.

We mainly encounter $1$ or $2$-dimensional polyhedra as objects in these categories. For topological spaces which are homeomorphic to these polyhedra PL structures are known to be unique. Note also that $1$, $2$ and $3$-dimensional topological manifolds have the structures of PL and smooth manifolds uniquely. See \cite{moise} for example.

We present several important notions and theory which are important in our study and related studies. It is important that every continuous map between polyhedra is regarded as a PL (piecewise smooth) map by considering a suitable approximation by a homotopy. We present \cite{hirsch} as a paper on {\it regular neighborhoods} of a subpolyhedron discussed in the smooth category. Such facts and notions will also appear implicitly throughout the present paper.

\begin{Def}
\label{def:2}
If a polyhedron $P$ is PL homeomorphic to the image of a smooth immersion $i_P$, represented as the disjoint union of all smooth immersions in some family $\{c_{i}:X_i \rightarrow Y\}_{i \in I}$ and the family has normal crossings only as crossings, then the immersion $i_P$ and this family are said to be {\it premaps} for $P$. We also call them {\it premaps} if the spaces of the targets are restricted to make the maps surjections.
\end{Def}

Hereafter, ${\mathbb{R}}^k$ denotes the $k$-dimensional Euclidean space for each integer $k \geq 0$ and ${\mathbb{R}}^1$ is also denoted by $\mathbb{R}$ in a natural way. $\mathbb{Z} \subset \mathbb{R}$ denotes the ring of all integers. We regard ${\mathbb{R}}^k$ as the smooth manifold with the natural differentiable structure and the Riemannian manifold with the standard Euclidean metric. $||x|| \geq 0$ denotes the distance between $x \in {\mathbb{R}}^k$ and the origin $0 \in {\mathbb{R}}^k$. 

A {\it diffeomorphism} is a smooth homeomorphism with no singular points or such a homeomorphism of the class $C^{\infty}$. This is also a smooth embedding.
A {\it smooth} isotopy $\Phi:X \times [0,1] \rightarrow Y$ is a smooth map which is an isotopy such that ${\Phi} {\mid}_{X \times \{t\}}$ is a smooth embedding for each $t \in [0,1]$. If one of the embedding here is a diffeomorphism, then all smooth embeddings here are diffeomorphisms. A {\it PL} ({\it smooth}) bundle means a bundle whose fiber is a polyhedron (resp. smooth manifold) and whose structure group consists of 
piesewise smooth homeomorphisms (resp. diffeomorphisms).

\begin{Def}
\label{def:3}
A {\it simple polyhedron} $P$ is a $2$-dimensional compact polyhedron satisfying the following three.
\begin{enumerate}
\item \label{def:3.1}
There exists a family $\{C_i\}_{i \in I}$ of finitely many $1$-dimensional mutually disjoint subpolyhedra of $P$: we call the disjoint union of all these subpolyhedra the {\it branch} of $P$. 
\item \label{def:3.2}
$P-{\sqcup}_{i \in I} C_i$ is a $2$-dimensional smooth manifold with no boundary.
\item \label{def:3.3}
Each $C_i$ and a small regular neighborhood $N(C_i)$ satisfy either of the following two.
\begin{enumerate}
\item \label{def:3.3.1}
$C_i$ is a circle and $N(C_i)$ is the total space of a trivial PL bundle over $C_i$ whose fiber is a closed interval where $C_i \subset N(C_i) \subset P$ is identified with
one connected component of the boundary: $C_i$ is identified with $C_i \times \{0\} \subset C_i \times [0,1]$.
\item \label{def:3.3.2}
A premap $\{c_{i,j}:C_{i,j} \rightarrow {\mathbb{R}}^2\}_{j \in J_i}$ for the polyhedron $C_i$ exists and the following {\rm (}PL{\rm )} topological properties are enjoyed.
\begin{enumerate}
\item \label{def:3.3.2.1}
$N(C_i)$ is the quotient space of the total space $N({\sqcup}_{j \in J_i} C_{i,j})$ of a PL bundle over ${\sqcup}_{j \in J_i} C_{i,j}$ whose fiber is PL homeomorphic to $K:=K_0 \bigcup K_1 \bigcup K_2$ with $K_j:=\{(r\cos \frac{2}{3}j\pi ,r\sin \frac{2}{3}j\pi)\mid 0 \leq r \leq 1.\}$ and whose structure group is a trivial group or a group of order $2$ acting in the following way.
\begin{enumerate}
\item The action by the non-trivial group fixes (all points in) $K_0$.
\item The non-trivial element maps $(r\cos \frac{2}{3}\pi, r\sin \frac{2}{3}\pi)$ to $(r\cos \frac{4}{3}\pi, r\sin \frac{4}{3}\pi)$. 
\end{enumerate}
${\sqcup}_{j \in J_i} C_{i,j} \subset N({\sqcup}_{j \in J_i} C_{i,j})$ is identified with the total space of the subbundle ${\sqcup}_{j \in J_i} C_{i,j} \times \{0\} \subset N({\sqcup}_{j \in J_i} C_{i,j})$, obtained by restricting the fiber to $\{0\} \subset K \subset {\mathbb{R}}^2$.
\item \label{def:3.3.2.2}
$N(C_i)$ is obtained from $N({\sqcup}_{j \in J_i} C_{i,j})$ as the quotient space of $N({\sqcup}_{j \in J_i} C_{i,j})$ as follows. We identify $C_i$ with the image of the premap ${\sqcup}_{j \in J_i} c_{i,j}:{\sqcup}_{j \in J_i} C_{i,j} \rightarrow {\mathbb{R}}^2$. The preimage of a point in the image of this premap consists of at most two points and the number of points whose preimages consist of exactly two points is finite. Let $\{p_a\}_{a \in A} \subset C_i$ denote the set of all of such points: we call such a point a {\it vertex} of $P$. We consider a suitable small regular neighborhood of a point $p_{a,1}$ in the preimage $\{p_{a,1},p_{a,2}\}$ of $p_a$, denoted by a closed interval $[l_{a,1},u_{a,1}]$ and we consider a trivialization $[l_{a,1},u_{a,1}] \times K$ of the restriction of the bundle $N({\sqcup}_{j \in J_i} C_{i,j})$ over ${\sqcup}_{j \in J_i} C_{i,j}$ there. For the point $p_{a,2}$, we consider a suitable small regular neighborhood, denoted by a closed interval $[l_{a,2},u_{a,2}]$ and consider a trivialization $[l_{a,2},u_{a,2}] \times K$ of the restriction of the bundle $N({\sqcup}_{j \in J_i} C_{i,j})$ over ${\sqcup}_{j \in J_i} C_{i,j}$ there. We identify $[l_{a,1},u_{a,1}] \times {(K_0 \bigcup K_{b_1})}$ for $b_1=1$ or $b_1=2$ with $[l_{a,2},u_{a,2}] \times {(K_0 \bigcup K_{b_2})}$ for $b_2=1$ or $b_2=2$. These two spaces are regarded as products of closed intervals. We identify them via a piesewise smooth homeomorphism mapping $\{\frac{l_{a,1}+u_{a,1}}{2}\} \times (K_0 \bigcup K_{b_1})$  onto $[l_{a,2},u_{a,2}] \times \{0\}$. By such an identification for each $a$, we have $N(C_i)$.
\end{enumerate}
\end{enumerate}
\end{enumerate}
\end{Def}
One of important studies on this is \cite{ikeda}, studying global topologies of simple polyhedra and also presenting important topics in low-dimensional topology. \cite{turaev} is also important. Note that the name "simple polyhedron" may not be used for such a polyhedron in general. {\it Branched surfaces} (in \cite{kitazawa5}) are simple polyhedra without vertices. {\it Surfaces} are regarded as $2$-dimensional manifolds which may not be compact or closed.

For a continuous map $c:X \rightarrow Y$, an equivalence relation ${\sim}_c$ on $X$ is defined: $x_1 {\sim}_c x_2$ if and only if $x_1$ and $x_2$ are in a same connected component of a preimage $c^{-1}(y)$ for some $y \in Y$.
\begin{Def}
\label{def:4}
The quotient space $W_c:=X/{\sim}_c$ is the {\it Reeb space} of $c$.
\end{Def}
$q_c:X \rightarrow W_c$ denotes the quotient map and we can define a map $\bar{c}$ in a unique way by the relation $c=\bar{c} \circ q_c$. One of pioneering articles on Reeb spaces is \cite{reeb}. 

For a smooth map $c:X \rightarrow Y$, a {\it singular} point $p \in X$ is defined as a point such that the rank of the differential ${dc}_p$ there is smaller than $\min\{\dim X,\dim Y\}$. We call the set of all singular points of $c$ the {\it singular set} of $c$, denoted by $S(c)$. $c(S(c))$ is called the {\it singular value set} of $c$. The complementary set of the singular set of $c$ is the {\it regular value set} of $c$. 

\begin{Def}
\label{def:5}
Let $M$ be an $m$-dimensional closed and smooth manifold and $N$ an $n$-dimensional smooth manifold with no boundary with $m \geq n \geq 1$. A smooth map $f:M \rightarrow N$ is said to be a {\it fold} map if at each singular point $p$ $f$ is represented by the form $(x_1,\cdots,x_m) \rightarrow (x_1,\cdots,x_{n-1},{\Sigma}_{j=n}^{m-i(p)} {x_j}^2-{\Sigma}_{m-i(p)+1}^{m} {x_j}^2)$
for suitable coordinates and a suitable integer $0 \leq i(p) \leq \frac{m-n+1}{2}$.
\end{Def}
\begin{Prop}
\label{prop:1}
In Definition \ref{def:5}, for any singular point $p$ of $f$, $i(p)$ is unique and we can define the {\rm index} of $p$ by the integer $i(p)$. 
The set of all singular points of a fixed index is a closed and smooth submanifold with no boundary and the dimension is $n-1$. The restriction of $f$ to this submanifold is a smooth immersion.
\end{Prop}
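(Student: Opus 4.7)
The plan is to deduce each claim from the local normal form in Definition~\ref{def:5} by an intrinsic interpretation of the index together with a patching argument.

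For the uniqueness of $i(p)$, I would first observe that at any singular point $p$ the normal form forces $\mathrm{rank}\, df_p = n-1$, so $K_p:=\ker df_p\subset T_pM$ is a subspace of dimension $m-n+1$ and $\mathrm{im}\, df_p$ is a hyperplane in $T_{f(p)}N$. Choosing a nonzero linear functional $\lambda$ on $T_{f(p)}N$ that vanishes on $\mathrm{im}\, df_p$ (unique up to a nonzero scalar), the composition $\lambda\circ f:M\to\mathbb{R}$ has a critical point at $p$, so its Hessian at $p$ is a well-defined symmetric bilinear form on $T_pM$, and in particular on $K_p$. In the local coordinates of Definition~\ref{def:5} this Hessian on $K_p$ equals $\mathrm{diag}(2,\dots,2,-2,\dots,-2)$ with $m-n+1-i(p)$ positive entries and $i(p)$ negative entries, up to the overall sign of $\lambda$. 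By Sylvester's law of inertia the unordered pair $\{i(p),\, m-n+1-i(p)\}$ is an invariant of $p$, and the constraint $0\leq i(p)\leq (m-n+1)/2$ picks out a unique representative.

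Next, for the submanifold claim, I would read off from the normal form that in each singular chart $S(f)$ is cut out by $x_n=\cdots=x_m=0$, hence is locally a smooth $(n-1)$-dimensional submanifold with no boundary. Moreover every singular point lying in such a chart has (after recentring) the same local representation, and therefore the same index, as $p$; in particular each level set $S_i(f):=\{q\in S(f)\mid i(q)=i\}$ is open in $S(f)$. Since its complement in $S(f)$ is the union of the other open level sets, $S_i(f)$ is also closed in $S(f)$, and the closedness of $S(f)$ in the compact manifold $M$ makes $S_i(f)$ a closed smooth $(n-1)$-dimensional submanifold of $M$. The immersion assertion is then immediate: parametrising $S_i(f)$ near $p$ by $(x_1,\dots,x_{n-1})$ and applying the normal form, one finds $f(x_1,\dots,x_{n-1})=(x_1,\dots,x_{n-1},0)$, whose differential is injective.

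The main technical obstacle I expect is the uniqueness of $i(p)$, because the normal form in Definition~\ref{def:5} is only fixed up to arbitrary smooth coordinate changes on source and target. The invariant Hessian construction above is what converts this ambiguity into a statement purely about the signature of a symmetric bilinear form on the intrinsic subspace $K_p$; the delicate points are that $\lambda\circ f$ really does have a critical point at $p$ (which uses that $\lambda$ annihilates $\mathrm{im}\, df_p$) and that the resulting Hessian on $K_p$ is unchanged under coordinate changes mixing the tangential and fibre directions. Once these two points are checked, the remaining assertions reduce to direct inspection of the local form.
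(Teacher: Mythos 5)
Your proof is correct. The paper itself offers no proof of Proposition~\ref{prop:1}, treating it as a standard fact from the fold-map literature (e.g.\ \cite{golubitskyguillemin}, \cite{saeki}); your argument --- the intrinsic Hessian of $\ell\circ f$ (for any local function $\ell$ with $d\ell_{f(p)}=\lambda$) restricted to $\ker df_p$, Sylvester's law plus the normalization $0\leq i(p)\leq\frac{m-n+1}{2}$ for uniqueness, local constancy of the index for the closed-submanifold claim, and direct inspection of the normal form for the immersion claim --- is exactly the standard justification, and the one subtlety you flag (that only the restriction of the Hessian to $\ker df_p$ is independent of the extension of $\lambda$ and of coordinate changes) is handled correctly by restricting to the kernel before invoking invariance.
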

\begin{Def}
\label{def:6}
In Definition \ref{def:5}, if the restriction $f {\mid}_{S(f)}$ to the singular set has normal crossings only as crossings, then $f$ is said to be {\it stable}.  
\end{Def}
It is well-known that by a slight perturbation, a fold map is deformed to a stable one where the topology on the space of all smooth maps between the manifolds is the {\it Whitney $C^{\infty}$ topology}. For related systematic theory on singularities of differentiable maps, see \cite{golubitskyguillemin} for example. For fold maps, see as pioneering studies \cite{thom, whitney} on so-called generic maps into ${\mathbb{R}}^2$ on smooth manifolds whose dimensions are greater than or equal to $2$. \cite{saeki} is one of pioneering studies on fold maps and (differential) topological properties of manifolds admitting such maps.

\begin{Fact}[E.g. \cite{kobayashisaeki, shiota}]
Let $m > n \geq 1$ in Definition \ref{def:5}. For a fold map $f$ there, $W_f$ is a polyhedron.
In the case $n=2$, for a stable fold map $f$, $W_f$ is a simple polyhedron. In the case $n=2$, for a stable fold map $f$ such that $q_f {\mid}_{S(f)}$ is injective, or equivalently, a so-called {\rm simple} fold map $f$, $W_f$ is a simple polyhedron without vertices or a {\rm branched surface} in \cite{kitazawa5}.
\end{Fact}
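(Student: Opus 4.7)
The plan is to reduce everything to the fold normal form of Definition~\ref{def:5} and carry out a local-to-global analysis on $W_f$.

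For the first assertion, I would cover the closed manifold $M$ by finitely many charts on which $f$ is either a local submersion or appears in the fold normal form. Over a submersion chart the Reeb quotient is an open subset of $N$ (possibly taken with multiplicity according to the number of local components of a fibre), which is obviously a polyhedron. Over a fold chart centred at a singular point $p$, the normal form factors $f$ as the identity on $(x_1,\dots,x_{n-1})$ followed by the quadratic form
\[
Q_{i(p)}(x_n,\dots,x_m)=\sum_{j=n}^{m-i(p)}x_j^2-\sum_{j=m-i(p)+1}^{m}x_j^2,
\]
so the local Reeb space is the product of $\mathbb{R}^{n-1}$ with the Reeb space of $Q_{i(p)}$ on a small disc, the latter being a finite $1$-complex. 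Each local Reeb chart is therefore a polyhedron, and PL transitions between charts (as in \cite{shiota}) assemble these into a global PL structure on $W_f$.

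For the second assertion I would specialise to $n=2$ and inspect the local Reeb chart at each singular point. The Reeb graph of $Q_{i(p)}$ on a small disc depends only on the signature $(p,q)=(m-1-i(p),i(p))$: it is a half-line when $\min(p,q)=0$ (definite fold), and a three-arm ``Y'' when $\{p,q\}=\{1,k\}$ with $k\geq 2$. Crossing with the $x_1$-line returns exactly the two local model types in Definition~\ref{def:3}~(3): the definite case matches (3)(a), a collar of a boundary circle of the open surface $P-\sqcup_iC_i$, while the Y-case matches (3)(b)(i), a triple-book $I\times K$ with possible $\mathbb{Z}/2$-monodromy coming from the orientation behaviour of the two branches around a component of the singular set. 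Stability of $f$ forces $f|_{S(f)}$ to be an immersion whose images in $N$ have only normal crossings in the sense of Definition~\ref{def:1}; under the Reeb quotient these normal crossings are precisely the vertices of (3)(b)(ii), where two triple-book pieces get identified along a two-page sub-bundle in exactly the prescribed manner. Assembling all these local pictures produces the simple polyhedron structure on $W_f$. For the third assertion, injectivity of $q_f|_{S(f)}$ says that distinct components of the singular-set image stay separated in $W_f$, so the branch $\sqcup_iC_i$ has no double points; there are therefore no vertices, and $W_f$ is a simple polyhedron without vertices, i.e.\ a branched surface in the sense of \cite{kitazawa5}.

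The main technical obstacle I anticipate is the borderline case $(p,q)=(1,1)$, which occurs when $m=n+1$ and the fold is indefinite: here $Q_{i(p)}(x_2,x_3)=x_2^2-x_3^2$, and the Reeb graph of $Q_{i(p)}$ on a disc has \emph{four} arms rather than three (both $c>0$ and $c<0$ give two-component fibres). This produces a four-page ``$X\times I$''-type book rather than the triple-book of Definition~\ref{def:3}~(3)(b)(i), so one must either impose an implicit restriction to $m\geq n+2$ or reinterpret the four-page book as a quotient of pairs of triple-books joined at auxiliary vertices; such reorganisations are handled carefully in \cite{kobayashisaeki,shiota}, which is why the statement is quoted as a Fact rather than proven from scratch. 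A secondary but standard step is to verify that the PL atlas assembled locally on $W_f$ actually glues to a coherent global PL structure; this uses the uniqueness of low-dimensional PL structures recalled in the introduction.
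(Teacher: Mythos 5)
The paper offers no proof of this statement at all---it is quoted as a Fact with references to \cite{kobayashisaeki} and \cite{shiota}---so your sketch has to be judged against the standard arguments in those sources. Your overall strategy (extract local models of $W_f$ from the fold normal form, match them with Definition \ref{def:3}, and observe that crossings of $f{\mid}_{S(f)}$ produce the vertices while injectivity of $q_f{\mid}_{S(f)}$ excludes them) is the right one in spirit, and the matching of the definite fold with (\ref{def:3.3.1}) and of the indefinite fold with (\ref{def:3.3.2}) is correct as far as it goes.

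There is, however, a genuine conceptual gap: you compute the Reeb space of the \emph{germ} of $f$ at a single point $p$ and treat it as the local structure of $W_f$ at $q_f(p)$. These are not the same thing. A point of $W_f$ is a connected component of a global fiber, so the local structure of $W_f$ near $q_f(p)$ must be read off from a saturated neighborhood of the entire compact fiber component through $p$, not from a coordinate chart around $p$. This is precisely what dissolves your ``main technical obstacle'': for $m=3$ at an indefinite fold point the local level sets of $x_2^2-x_3^2$ do form a four-armed cross, but the fiber component containing $p$ is a compact connected graph with one four-valent vertex (a figure eight), and resolving its crossing on either side necessarily joins local branches into global components. Checking the possible pairings of the four local rays by the two loops, one gets either two components on one side and one on the other (the $K\times[0,1]$ model of (\ref{def:3.3.2.1})) or one component on both sides (a manifold point of $W_f$); the four-page book never occurs for a closed $M$. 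Both of your proposed remedies are therefore off the mark: restricting to $m\geq n+2$ would discard the most important case $m=3$, and no reinterpretation of a four-page book is needed. The same global viewpoint is what justifies the vertex model (two fold points in one fiber component over a crossing of $f(S(f))$) and is also needed for indices $i(p)\geq 2$, a case your signature analysis omits: there both sides of the fold value have connected level sets, so $q_f(p)$ is a manifold point of $W_f$ and contributes nothing to the branch. Finally, for the first assertion the genuinely hard step is the triangulability of the Stein factorization, which is the content of \cite{shiota}; your phrase about PL transitions assembling into a global PL structure names that theorem rather than proving it, which is tolerable only because the statement is being quoted as a Fact.
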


The present paper concerns the following problems.

\begin{MainProb}
\label{mprob:1}
Is a given simple polyhedron homeomorphic to the Reeb space of a stable fold map on a closed manifold whose dimension is at least $3$ into a surface? 
\end{MainProb}
\begin{MainProb}
	\label{mprob:2}
	What can we say about the global topology of such a polyhedron? Which $3$-dimensional closed and connected manifolds can we embed this into, for example? This explicit problem is motivated by \cite{matsuzakiozawa, munozozawa, ozawa} for example. These studies are regarded as variants of ones on embeddability of graphs into surfaces.
\end{MainProb}

\cite{kitazawa5} gives some answers and related results. \cite{kitazawa5} also explains about existing related studies and the present study shows a kind of new explicit development closely related to this. Moreover, \cite{kitazawa5} has precise information more than the present paper
on history on these studies and facts related to our present study.
Main results of \cite{kitazawa5} are for simple fold maps into surfaces and simple polyhedra without vertices or equivalently, branched surfaces. 

The organization of our paper is as follows. The next section is devoted to presenting main results or Main Theorems \ref{mthm:1}, \ref{mthm:2}, \ref{mthm:3} and \ref{mthm:4}, starting from introducing notions and fundamental important
arguments. Except Main Theorem \ref{mthm:4}, these results are extensions of results presented as Main Theorems of \cite{kitazawa5} for simple polyhera. Main Theorem \ref{mthm:4} contains a result of a new type, presenting arguments of a new type. This is on non-orientable surfaces embedded as subpolyhedra of simple polyhedra. The third section is devoted to proofs of our Main Theorems and examples for example.

\section{Preliminaries and Main Theorems.}
$S^k:=\{x \in {\mathbb{R}}^{k+1} \mid ||x||=1\}$ denotes the $k$-dimensional unit sphere for an integer $k \geq 0$. This is a $k$-dimensional closed smooth submanifold in ${\mathbb{R}}^{k+1}$ with no boundary. $D^k:=\{x \in {\mathbb{R}}^{k} \mid ||x||\leq 1\}$ denotes the $k$-dimensional unit disk for an integer $k \geq 1$ and this is a $k$-dimensional compact and smooth submanifold.

\begin{Def}[E. g. \cite{kitazawa, saekisuzuoka, suzuoka}]
\label{def:7}
A stable fold map in Definition \ref{def:5} on an $m$-dimensional closed and orientable manifold into an $n$-dimensional manifold with no boundary satisfying $m>n \geq 1$ is said to be {\it standard-spherical} if the following conditions are satisfied.
\begin{enumerate}
\item The index of each singular point is always $0$ or $1$.
\item Preimages containing no singular points are disjoint unions of copies of the ($m-n$)-dimensional unit sphere $S^{m-n}$.
\end{enumerate}
\end{Def}

\begin{Def}
\label{def:8}
Let $m \geq 3$ be an integer. 
\begin{enumerate}
	\item A {\it normal} simple polyhedron is a simple polyhedron where the bundles over ${\sqcup}_j C_{i_j}$ whose fibers are PL homeomorphic to $K$ in Definition \ref{def:3} are always trivial. An {\it SSN} fold map is a standard-spherical fold map on an $m$-dimensional closed manifold into a surface with no boundary such that the Reeb space is normal. 
\item 
A map {\it born from an SSN fold map} $c:P \rightarrow N_c$ is a continuous {\rm (}piesewise smooth{\rm )} map such that there exist an SSN fold map $f:M \rightarrow N$ and a pair $(\Phi,\phi)$ of piesewise smooth homeomorphisms satisfying $\bar{f} \circ \Phi=\phi \circ c$.
\end{enumerate}
\end{Def}

The following proposition is a kind of fundamental principles, we can also know from referred articles on fold maps. 

\begin{Prop}
\label{prop:2}
Let $m \geq 3$ be an integer. Suppose that a continuous {\rm (}piesewise{\rm )} map $c:P \rightarrow N$ is locally born from an SSN fold map. Suppose also that the composition of the restriction of some premap for the branch ${\sqcup}_{i \in I} C_i$ with the restriction of the map $c_0$ to the branch is a smooth immersion having normal crossings only as crossings. Then there exist an $m$-dimensional closed manifold $M$, an SSN fold map $f:M \rightarrow N$ and a PL homeomorphism $\Phi:P \rightarrow W_f$ satisfying $\bar{f} \circ \Phi=c$ and $S(f)={\sqcup}_{i \in I} C_i$. Furthermore, $c$ is born from an SSN fold map.
\end{Prop}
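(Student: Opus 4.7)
The plan is to construct $M$ and $f$ by patching local models together, using the locally-born hypothesis to produce charts and the normal together with the normal-crossings hypotheses to guarantee coherent global gluing.

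First, I would stratify $P$ according to its simple polyhedron structure into three kinds of pieces: the open surface $P_0 := P - \sqcup_i C_i$, small product neighborhoods of arcs in the smooth part of the branch away from vertices, and the prescribed regular neighborhoods of the vertices as in Definition \ref{def:3}. Taking a finite open refinement $\{U_\alpha\}$ compatible with this stratification, the locally-born hypothesis provides, for each $U_\alpha$, an $m$-manifold $M_\alpha$ and an SSN fold map $f_\alpha: M_\alpha \to N_\alpha$ together with a PL homeomorphism realizing $c|_{U_\alpha}$ as $\bar{f}_\alpha$ up to PL change of target.

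Second, I would identify the standard shape of each local model. Over a disk in $P_0$ the model is trivial: $M_\alpha \cong D^2 \times S^{m-2}$ with $f_\alpha$ the projection. Over an arc in the smooth part of the branch the model is an indefinite fold of index $0$ or $1$ (from Definition \ref{def:5} with the standard-spherical constraint) causing two sheets of the $S^{m-2}$-bundle to meet along the singular submanifold. Over a vertex neighborhood the model is the gluing of two such indefinite-fold pieces along the triple-intersection structure of Definition \ref{def:3}.

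Third, I would assemble these local models into a global closed $m$-manifold $M$ and a smooth map $f: M \to N$. The normal hypothesis makes every $K$-bundle over $\sqcup_j C_{i,j}$ trivial, so local trivializations of the fold model around each branch component can be chosen globally compatibly; the standard-spherical condition reduces the transition data over $P_0$ to that of an $S^{m-2}$-bundle whose behavior along the branch is dictated by the adjacent indefinite-fold normal forms. The hypothesis that the premap of $\sqcup_i C_i$ postcomposed with $c$ is a smooth immersion with normal crossings then gives $f|_{S(f)}$ as a smooth immersion with normal crossings only as crossings, making $f$ stable. By construction $S(f) = \sqcup_j C_{i,j}$ and there is a PL homeomorphism $\Phi: P \to W_f$ with $\bar{f} \circ \Phi = c$, so $c$ is born from an SSN fold map.

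The main obstacle is the global coherence in the third step: one must verify that local choices of orientation, of index pairing across each branch arc, and of $K$-bundle trivializations near vertices can be synchronized globally. This is precisely what the normal condition achieves; without it, nontrivial $\mathbb{Z}/2$-monodromy along a component $C_{i,j}$ could obstruct the global assembly of $M$ from the local fold models. Once the local models are normalized, the remaining verification on triple intersections, especially at vertex neighborhoods, reduces to a routine cocycle check.
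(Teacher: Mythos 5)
Your proposal is correct and follows essentially the same route as the paper: stratify $P$ into the surface part, the branch away from the vertices, and the vertex neighborhoods; place standard fold models (trivial $S^{m-2}$-bundles over the surface part, products of Morse functions with the identity over the branch pieces, and a surgery-type gluing of two such pieces at each vertex); and use normality plus the normal-crossings hypothesis on $c$ restricted to the branch to assemble a stable SSN fold map $f$ with $\bar{f}\circ\Phi=c$. The paper is merely more explicit where you invoke a "routine cocycle check," realizing the vertex model by removing trivial $D^{m-2}$-bundles from two product-fold pieces away from the singular set and regluing along $\partial D^{m-2}\times[-1,1]\times[-1,1]$.
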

\begin{proof}
We have presented a proof of this for the case where $P$ is a branched surface in \cite{kitazawa5} for example. Construction around a vertex is a most important ingredient in our proof.

Note that \cite{kitazawa2, kitazawa3, kitazawa4} also present explicit construction of local smooth maps around a vertex for example. However, we present a proof in a different form where the main ingredient is essentially same. We abuse the notation in Definition \ref{def:3}.

We obtain a local desired map onto $N(C_i)$ in Definition \ref{def:3} (\ref{def:3.3.1}). We construct the product map of a Morse function on the ($m-1$)-dimensional unit disk $D^{m-1}$ represented by the form of a function mapping $(x_1,\cdots,x_{m-1})$ to ${\Sigma}_{j=1}^{m-1} {x_j}^2+c_i$ for some real number $c_i$ and the identity map on $C_i$ and consider the quotient map onto the Reeb space, regarded as $C_i \times [0,1]$. We consider a suitable piesewise smooth homeomorphism onto $N(C_i)$ regarded as the product map of a piesewise smooth homeomorphism on $C_i$ and one on the closed interval $[0,1]$ where we regard $N(C_i)$ as $C_i \times [0,1]$ in the canonical way.

We obtain a local desired map onto $N(C_{i,j})$ in Definition \ref{def:3} (\ref{def:3.3.2}). We construct the product map of a Morse function on a manifold obtained by removing the interiors of three copies of the ($m-1$)-dimensional unit disk $D^{m-1}$ smoothly and disjointly embedded  in a copy of the ($m-1$)-dimensional unit sphere $S^{m-1}$ and the identity map on $C_{i,j}$ and consider the quotient map onto the Reeb space, regarded as $C_{i,j} \times K$. We consider a suitable piesewise smooth homeomorphism onto $N(C_{i,j})$, regarded as the product map of a piesewise smooth homeomorphism on $C_{i,j}$ and one on $K$ where we regard $N(C_{i,j})$ as $C_{i,j} \times K$ in the canonical way. The Morse function is as follows.
\begin{itemize}
\item The preimage of the minimal value and one or two of the connected components of the boundary coincide.
\item The preimage of the maximal value and the union of the remaining connected components of the boundary coincide.
\item There exists exactly one singular point and it is in the interior.
\end{itemize}
By a fundamental property of Morse functions, for each $p_a \in C_i$ we can remove the interior of the total space of a trivial smooth bundle over $[l_{a,1},u_{a,1}] \times (K_0 \bigcup K_{b,1})$ whose fiber is diffeomorphic to the ($m-2$)-dimensional unit disk $D^{m-2}$ apart from the singular set of the original product map. We can also remove that of a trivial smooth bundle over $[l_{a,2},u_{a,2}] \times (K_0 \bigcup K_{b,2})$ whose fiber is diffeomorphic to the ($m-2$)-dimensional unit disk $D^{m-2}$ apart from the singular set of the original product map. After the removal, we can identify the two new cornered smooth manifolds, diffeomorphic to $\partial D^{m-2} \times [-1,1] \times [-1,1]$, by a suitable diffeomorphism in such a way that we have a desired local map onto $N(C_i)$.  

We construct a trivial smooth bundle whose fiber is diffeomorphic to $S^{m-2}$ over the complementary set of the interior of a small regular neighborhood of the branch.

By gluing the local maps in a suitable way, we have a desired map onto $P$. By composing the map with $c$, we have a desired map.
This completes the proof.
\end{proof}

\begin{Def}
\label{def:9}
Hereafter, a map $c$ in Proposition \ref{prop:2} is defined as a map {\it born from an SSN fold map}.
\end{Def}

For a simple polyhedron $P$, a surface $N$ with no boundary, and a map $c:P \rightarrow N$ born from an SNS fold map, $B(c)$ denotes the branch $\sqcup C_j$ in Definition \ref{def:3}.


\begin{Def}
\label{def:10}
Let $\{c_j:X_j \rightarrow Y\}_{j \in J}$ be a family of finitely many piesewise smooth maps where $c_j$ is a map from a closed manifold $X_j$ into a manifold $Y$ with no boundary. If there exists a family $\{{\Phi}_j:Y \times [0,1] \rightarrow Y\}_{j \in J}$ of smooth isotopies satisfying ${\Phi}_j(y,0)=y$ for any $y \in Y$, then the family $\{c_j:X_j \rightarrow Y\}_{j \in J}$ is said to be {\it smoothly isotopic} to $\{{\Phi}_{1,j} \circ c_j:X_j \rightarrow Y\}_{j \in J}$ where ${\Phi}_{1,j}$ is a diffeomorphism on $Y$ mapping $y \in Y$ to ${\Phi}_{j}(y,1)$.
\end{Def}
We present four Main Theorems and except Main Theorem \ref{mthm:4}, these results are extensions of results presented as Main Theorems of \cite{kitazawa5}. Main Theorem \ref{mthm:4} is a theorem of a new type and presenting arguments of a new type. This discusses subpolyhedra which are non-orinetable surfaces in branched surfaces or simple polyhedra. 
\begin{MainThm}
\label{mthm:1}
Let $c:P \rightarrow N$ be a map born from an SSN fold map.
Suppose that $\{T_j\}_{j \in J}$ is a family of finitely many circles which are disjointly embedded in $P$ as subpolyhedra. We also assume the following conditions.
\begin{itemize}
\item Let there exist a premap for the union of $B(c)$ and the disjoint union ${\sqcup}_{j \in J} T_j$ of all circles in the family $\{T_j\}_{j \in J}$. The composition of this with the restriction of $c$ to the union is a smooth immersion having normal crossings only as crossings.
\item ${\bigcup}_{j \in J} c(T_j)$ is image of the restriction of some smooth immersion of some compact and connected surface $S_C$ into $N$ to the boundary.
\end{itemize}
Then by attaching a surface {\rm (}PL{\rm )} homeomorphic to $S_C$ along ${\sqcup}_{j \in J} T_j$ on the boundary by a piesewise smooth homeomorphism, we have a new normal branched surface $P^{\prime}$ and a map $c^{\prime}:P^{\prime} \rightarrow N$ born from an SSN fold map.
\end{MainThm}

\begin{MainThm}
\label{mthm:2}
Let $c_0:P \rightarrow N$ be a map born from an SSN fold map.
Let $\{T_{0,j}\}_{j \in J}$ be a family of finitely many circles which are disjointly embedded in $P$ and regarded as subpolyhedra of $P$. Assume also the following conditions. 
\begin{itemize}
\item $N$ is connected and $N-c_0(P)$ is not empty.
\item There exists a family $\{D_{0,j}\}_{j \in J}$ of finitely many copies of the $2$-dimensional unit disk $D^2$ smoothly and disjointly embedded in N as sufficiently small polyhedra and $c_0(T_{0,j}) \subset {\rm Int}\ D_{0,j}$.
\item There exists a premap for the union of $B(c_0)$ and the disjoint union $({\sqcup}_{j \in J} T_{0,j})$.
The composition of this premap with the restriction of $c_0$ to the previous union is a smooth immersion having normal crossings only as crossings.
\item The family of the piesewise smooth maps in the family $\{c_0 {\mid}_{T_{0,j}}\}_{j \in J}$ is smoothly isotopic to a family of smooth immersions having normal crossings only as crossings such that the restriction of some smooth immersion of some compact and connected surface $S_C$ into $N$ to the boundary is the disjoint union of all immersions of this family.
\end{itemize} 
Then we have the following two.
\begin{enumerate}
\item We have a new map $c:P \rightarrow N$ by a suitable piesewise smooth homotopy $F_c$ from $c_0$ to $c$.
\item We can apply Main Theorem \ref{mthm:1} to $c$ by setting $T_j:=F_c(T_{0,j} \times \{1\})$ and choosing $S_C$ as a compact and connected.
\end{enumerate}
\end{MainThm} 

We need some additional notions for our remaining Main Theorems.

\begin{Def}
	\label{def:11}
	A continuous (piesewise smooth) map between polyhedra is said to be {\it regarded as embeddings locally} (resp. {\it piesewise smooth embeddings locally}) if at each point in the polyhedron of the domain there exists a regular regular neighborhood of it the restriction of the map to which is an embedding (resp. a piesewise smooth embedding).
\end{Def}
Related to this defintion, we only consider piesewise smooth cases.

The following definition is for cases we encounter in our paper.

\begin{Def}
	\label{def:12}
	Let $c:P \rightarrow N$ be a map born from an SSN fold map.
	For a copy $D_c$ of the $2$-dimensional unit disk $D^2$ embedded as a subpolyhedron of $P$, suppose the following conditions.
	\begin{enumerate}
		\item There exists a premap for the union of $B(c)$ and the boundary $\partial D_c$ of $D_c$ such that the composition of the premap with the restriction of $c$ to this union is a smooth immersion having normal crossings only as crossings.
		\item The restriction $c {\mid}_{D_c}$ is regarded as piesewise smooth embeddings locally.

	\end{enumerate}
	We call such an embedded disk $D_c$ a {\it weakly and normally embedded disk with respect to $c$}. Suppose also that the intersection $P_{R,{\rm N}} \bigcap D_c$ of any non-orientable connected component $P_{R,{\rm N}}$ of $P-B(c)$ and $D_c$ is the empty set. Then, we call such an embedded disk $D_c$ a {\it normally embedded disk with respect to $c$}.
\end{Def}
For example, in Main Theorem \ref{mthm:2}, if each $D_j$ is in $P-B(c_0)$ and in an connected component of $P-B(c_0)$ which may not be orientable (which is orientable), then this is a (resp. weakly and) normally embedded disk with respect to $c_0$. See \cite{kitazawa5} for the case $D_j$ is in $P-B(c_0)$ with a branched surface $P$.

In Main Theorem \ref{mthm:3}, we also need the notion of the {\it Heegaard genus} of a $3$-dimensional closed and connected manifold. We can decompose such a manifold into two copies of a so-called {\it handlebody} along a closed and connected surface, which is called a {\it Heegaard surface}. A {\it handlebody} is a $3$-dimensional PL manifold obtained by attaching finitely many copies of $D^1 \times D^2$ one after another by piesewise smooth homeomorphisms each of which is from $\partial D^1 \times D^2=S^0 \times D^2$ onto the union of a pair of copies in the family of finitely many copies of the $2$-dimensional unit disk $D^2$ smoothly and disjointly embedded in the boundary of a copy of the $3$-dimensional unit disk $D^3$. The number of the copies of $D^1 \times D^2$ is well-defined and this is called the {\it genus} of the handlebody. The {\it Heegaard genus} of the closed and connected manifold is the minimal number of the genera of the handlebodies we obtain via the decompositions before or the minimal number of genera of the Heegaard surfaces. 
See \cite{hempel} for related theory on $3$-dimensional manifolds.

\begin{MainThm}
\label{mthm:3}
Let $c_0:P \rightarrow N$ be a map born from an SSN fold map.
Let $\{T_j\}_{j \in J}$ be a family of $l>0$ circles which are disjointly embedded in $P$ and regarded as subpolyhedra of $P$. 

Furthermore, assume also the following conditions.
\begin{enumerate}[{\rm (C}1{\rm )}]
\item
\label{thm:3C1}
 There exists a premap for the union of $B(c_0)$ and the disjoint union of all circles in the family $\{T_j\}_{j \in J}$. The composition of the premap with the restriction of $c_0$ to this union is a smooth immersion having normal crossings only as crossings.
\item
\label{thm:3C2}
 $N$ is connected and $N-c_0(P)$ is not empty.
\item 
\label{thm:3C3}
$T_j$ is regarded as the boundary of a copy $D_j$ of the $2$-dimensional unit disk $D^2$ regarded as a subpolyhedron of $P$.
\item 
\label{thm:3C4}
Distinct disks in $\{D_j\}_{j \in J}$ are mutually disjoint in $P$.
\item
\label{thm:3C5}
 Each disk $D_j$ is a weakly and normally embedded disk with respect to $c_0$.
\item
\label{thm:3C6}
 The family of the piesewise smooth maps in the family $\{c_0 {\mid}_{T_{0,j}}\}_{j \in J}$ is smoothly isotopic to a family of smooth immersions having normal crossings only as crossings such that the restriction of some smooth immersion of some compact, connected and orientable surface $S_C$ of genus $0$ into $N$ to the boundary is the disjoint union of all immersions of this family.
\item
\label{thm:3C7}
 We can embed $P$ in a $3$-dimensional closed and connected manifold $X_g$ whose Heegaard genus is $g$ where we argue in the PL category.
\end{enumerate}
Then we can obtain the new map $c^{\prime}$ from $c$ {\rm (}$c_0${\rm )} as in Main Theorems \ref{mthm:1} and \ref{mthm:2} by applying the methods of our proofs and a new normal simple polyhedron $P^{\prime}$. Moreover, we can apply Main Theorem \ref{mthm:2} here by setting $S_C$ as a compact, connected and orientable surface of genus $0$.

In addition, for each $D_j$, assume also that $D_j \bigcap B(c_0)$ is empty or a closed interval whose boundary is embedded into the boundary $\partial D_j$ and whose interior is embedded into the interior by a piesewise smooth embedding. In this case, we can also embed $P^{\prime}$ in an arbitrary $3$-dimensional closed, connected and orientable manifold represented as a connected sum of $X_g$ and $l-1$ manifolds each of which is $S^2 \times S^1$ or the total space of a non-trivial smooth bundle over $S^1$ whose fiber is diffeomorphic to $S^2$ where we discuss in the PL category. In addition, the Heegaard genus of the resulting manifold is $g+l-1$.

\end{MainThm}

In Main Theorem \ref{mthm:4}, we need {\it graphs}.
A {\it graph} is a $1$-dimensional simplicial complex. The {\it vertex set} of the graph is the set of all $0$-simplexes where each element of the set is a {\it vertex}. 
Note that the notion of a vertex of a graph is different from that of a vertex of a simple polyhedron, defined in Definition \ref{def:3}.
The {\it edge set} of it is the set of all $1$-simplexes where each element of the set is an {\it edge}.

A {\it subgraph} of a graph means a subcomplex of the graph.

For a graph $G$, a {\it path} $p:[0,1] \rightarrow G$ from a vertex $v_1$ to another vertex $v_2$ means a piesewise smooth map satisfying the following conditions.
\begin{itemize}
	\item $p(t)=v_{t+1}$ for $t=0,1$.
	\item We have a sequence $\{t_j\}_{j=1}^{l+1} \subset [0,1]$ of length $l+1>1$ 
	\begin{itemize}
		\item $t_1=0$ and $t_{l+1}=1$.
		\item $t_j<t_{j+1}$ for any integer $1 \leq j \leq l$.
		\item The preimage of the vertex set is the set of all real numbers in the sequence.
		\item Each closed interval $[t_j,t_{j+1}]$ is mapped onto some edge of the graph by a piesewise smooth homeomorphism.
	\end{itemize}
\end{itemize}
	
	 We have a sequence of $\{(p(t_j),p([t_{j},t_{j+1}]))\}_{j=1}^l$. 
This is a sequence each element of which is a pair $(p(t_j),p([t_{j},t_{j+1}]))$ of a vertex and an edge containing the vertex $p(t_j)$ and another vertex $p(t_{j+1})$.
This sequence is also defined as a {\it path}.

As an equivalent way, we can define the graph as an abstract simplicial complex in a natural way. 
In short, vertices and edges are defined as abstract objects of a certain class. We can define the notions before similarly. We also adopt this way in discussing our graphs. Definition \ref{def:13} gives an example.
 
 Our graphs have no {\it loops} by the definition.
 Our graphs may be so-called {\it multigraphs} by the definition. In other words between two vertices, there may exist more than $1$ edges.
 
 We omit other elementary notions and properties on graphs.

\begin{Def}
\label{def:13}
In Definition \ref{def:12}, the graph $G_{c,D_c}$ satisfying the following
conditions is said to be the {\it graph associated with the pair} $(c,D_c)$.

	\begin{enumerate}
		\item
		The vertex set of $G_{c,D_c}$ is the set of all connected components of $P-B(c)$ whose intersections with $D_c$ are not empty {\rm (}and as a result the intersections are surfaces{\rm )}.
		\item
		We define the set of all edges connecting two distinct vertices or connected components $P_{R,1}$ and $P_{R,2}$ of $P-B(c)$ as the set consisting of all connected components of $\overline{P_{R,1}} \bigcap \overline{P_{R,2}} \bigcap (B(c)-\{p_a\}_{a \in A})$ such that the intersections with the disk $D_c$ are not empty  where $\{p_a\}_{a \in A} \subset B(c_0)$ denotes the set of all vertices of the simple polyhedron $P$. Note that these connected components are connected components of the $1$-dimensional smooth manifold $B(c)-\{p_a\}_{a \in A}$. Let each edge be denoted by the pair of the connected component of $\overline{P_{R,1}} \bigcap \overline{P_{R,2}} \bigcap (B(c)-\{p_a\}_{a \in A})$ and the unordered pair of the two vertices.
	
		\end{enumerate}
\end{Def}
\begin{MainThm}
	
\label{mthm:4}
Let $c_0:P \rightarrow N$ be a map born from an SSN fold map.
Let $\{T_j\}_{j \in J}$ be a family of $l>0$ circles which are disjointly embedded in $P$ and regarded as subpolyhedra of $P$.
We suppose what follows.
\begin{enumerate}[{\rm (C}1{\rm )}]
\item
\label{thm:4C1}
 There exists a premap for the union of $B(c_0)$ and the disjoint union of all circles in the family $\{T_j\}_{j \in J}$. The composition of the premap with the restriction of $c_0$ to this union is a smooth immersion having normal crossings only as crossings.
\item
\label{thm:4C2}
 $N$ is connected and $N-c_0(P)$ is not empty.
\item 
\label{thm:4C3}
$T_j$ is regarded as the boundary of a copy $D_j$ of the $2$-dimensional unit disk $D^2$ regarded as a normally embedded disk with respect to $c_0$.
\item
\label{thm:4C4}
 Distinct disks in $\{D_j\}_{j \in J}$ are mutually disjoint in $P$.

\item 
\label{thm:4C5}
We have a normally embedded disk $D_0$ with respect to $c_0$ such that
each graph $G_{c_0,D_j}$ associated with the pair $(c_0,D_j)$ is a subgraph of the graph $G_{c_0,D_0}$ associated with the pair $(c_0,D_0)$.
\item 
\label{thm:4C6}
The family of the piesewise smooth maps in the family $\{c_0 {\mid}_{T_{0,j}}\}_{j \in J}$ is smoothly isotopic to a family of immersions having normal crossings only as crossings such that the restriction of some smooth immersion of some compact, connected and orientable surface $S_C$ into $N$ to the boundary is the disjoint union of all immersions of this family.
\end{enumerate}
In this case, we can obtain the new map $c^{\prime}$ from $c$ {\rm (}$c_0${\rm )} as in Main Theorems \ref{mthm:1} and \ref{mthm:2} and a new normal simple polyhedron $P^{\prime}$ enjoying the following two properties in addition.
\begin{enumerate}
\item
\label{mthm:4.1}
 There exists a subpolyhedron PL homeomorphic to a non-orientable compact surface.
\item
\label{mthm:4.2}
In adddtion, we suppose the following conditions.
\begin{enumerate}
	\item
	\label{mthm:4.2.1}
	 ${\sqcup}_{j \in J} D_j$ is regarded as a subpolyhedron of a closed and connected surface $S_P$ which is also a subpolyhedron of $P$.
	\item We have the following graphs.
	\label{mthm:4.2.2}
	\begin{enumerate}
		\item \label{mthm:4.2.2.1}
		 From $S_P$, we have a graph $G_{S_p}$ where its vertex set and its edge set are as follows.
		\begin{enumerate}
			\item The vertex set consists of all connected components of $P-B(c_0)$ whose intersections with $S_P$ are not empty.   
			\item The edge set consists of connected components of the $1$-dimensional smooth manifold $B(c_0)-\{p_a\}_{a \in A}$ where $\{p_a\}_{a \in A} \subset B(c_0)$ denotes the set of all vertices of the simple polyhedron $P$.
			\item We define the set of all edges connecting two distinct vertices or connected components $P_{R,1}$ and $P_{R,2}$ of $P-B(c_0)$ as the set consisting of all connected components of $\overline{P_{R,1}} \bigcap \overline{P_{R,2}} \bigcap (B(c_0)-\{p_a\}_{a \in A})$. Let each edge be denoted by the pair of the connected component of $\overline{P_{R,1}} \bigcap \overline{P_{R,2}} \bigcap (B(c)-\{p_a\}_{a \in A})$ and the unordered pair of the two vertices as in Definition \ref{def:13}.
		\end{enumerate}
		\item \label{mthm:4.2.2.2}
		 For some pair of graphs $G_{c,D_{j_1}}$ and $G_{c,D_{j_2}}$, there exists a path $p:[0,1] \rightarrow G_{c,D_0}$ from some vertex $v_1 \in G_{c,D_{j_1}}$ to one $v_2 \in G_{c,D_{j_2}}$ enjoying the following properties.
		\begin{enumerate}
			
			\item 
			\label{mthm:4.2.2.2.1}
			As in the definition of a graph, there exists a sequence $\{t_j\}_{j=1}^{l+1} \subset \mathbb{R}$ of length $l>0$ enjoying the following properties.
			\begin{itemize}
		\item $t_1=0$ and $t_{l+1}=1$. 
		\item $t_j<t_{j+1}$ for $1 \leq j \leq l$.
		\item The set of all elements in $\{t_j\}_{j=1}^{l+1}$ is the preimage of the vertex set of $G_{c,D_0}$ for the path $p$. Put $v_j:=p(t_j)$.
		\item There exist an even and positive integer $l^{\prime}$ and a subsequence $\{t_{i_j}\}_{j=1}^{l^{\prime}}$ of length $l^{\prime}$ satisfying $i_{j^{\prime}}<i_{j^{\prime}+1}$ for any $1 \leq j^{\prime} \leq l^{\prime}-1$ and $i_{l^{\prime}} \leq l$.
		\item There exists another subsequence $\{t_{{i^{\prime}}_j}\}_{j=1}^{l^{\prime}}$ of length $l^{\prime}$ satisfying ${i}_{j^{\prime}}<{i^{\prime}}_{j^{\prime}} \leq {i}_{j^{\prime}+1}$ for any $1 \leq j^{\prime} \leq l^{\prime}-1$ and 
		${i}_{l^{\prime}}<{i^{\prime}}_{l^{\prime}} \leq l+1$.
        \end{itemize} 
			\item
			\label{mthm:4.2.2.2.2}
			 Furthermore, for the sequence before, the following properties are enjoyed.
			\begin{itemize}
				\item Let $j$ be an integer satisfying
				 $1 \leq j<i_1$, ${i^{\prime}}_{l^{\prime}} \leq j \leq l$ or
				  ${i^{\prime}}_{j^{\prime}} \leq j<i_{j^{\prime}+1}$ for some integer $1 \leq j^{\prime} \leq l^{\prime}-1$. In this case, $e_j:=p([t_j,t_{j+1}])$ is an edge in the 
				 graph $G_{S_P}$.
				\item For any integer $1 \leq j^{\prime} \leq l^{\prime}$ and two vertices $v_{t_{i_{j^{\prime}}}}$ and $v_{t_{{i^{\prime}}_{j^{\prime}}}}$, there exists at least one edge connecting these two in the graph $G_{S_P}$ and for any normally embedded disk $D_{c_0}$ with respect to $c_0$, there exist no edges connecting these two in the graph $G_{c_0,D_{c_0}}$ associated with $(c_0,D_{c_0})$.
			\end{itemize}
		\end{enumerate}
		
	\end{enumerate}
	\end{enumerate}
Then as a result, there exists a closed, connected and non-orientable surface which is also a subpolyhedron of $P^{\prime}$ and $P^{\prime}$ cannot be embedded into $S^3$ or a $3$-dimensional closed, connected and orientable manifold whose homology group is isomorphic to that of $S^3$ as a subpolyhedron where the coefficient ring is the group $\mathbb{Z}/2\mathbb{Z}$ of order $2$.
\end{enumerate}

\end{MainThm}
\section{Proofs of Main Theorems.}
We prove Main theorems. They can be shown by using similar arguments in proofs of Main Theorems of \cite{kitazawa5} with new additional arguments. However, we do not need to understand these original proofs well to understand our proofs.
\begin{proof}[A proof of Main Theorem \ref{mthm:1}.]
As the proof of Main Theorem 1 of \cite{kitazawa5}, we can attach a surface PL homeomorphic to $S_C$ along ${\sqcup}_{j \in J} T_j$ via a piesewise smooth homeomorphism between the boundaries and obtain a new continuous (piesewise smooth) map $c^{\prime}$. We need to remember the definitions of a normal simple polyhedron (Definition \ref{def:3}) and a map born from an SSN fold map (Definition \ref{def:9}) and we can easily construct a desired map by preserving the property that the maps are born from SSN fold maps. 
\end{proof}
\begin{Rem}
In \cite{kitazawa5}, as Main Theorem \ref{mthm:1}, we have relations between homology groups, cohomology groups and rings and fundamental groups of branched surfaces $P$ and $P^{\prime}$. We can extend the arguments in a similar way. Rigorous arguments and results are left to readers.
\end{Rem}
\begin{proof}[A proof of Main Theorem \ref{mthm:2}.]
We prove this by using an argument similar to that of the proof of Main Theorem 2 of \cite{kitazawa5}. 

We explain about related arguments precisely. We take a point $p_0$ and its small neighborhood $D_{p_0}$ which is a smooth compact submanifold in ${\rm Int}\ (N-c_0(P)) \subset N-c_0(P)$ and diffeomorphic to the $2$-dimensional unit disk $D^2$. We have a family $\{D_{1,j} \subset {\rm Int}\ D_{0,j}\}_{j \in J}$ of finitely many copies of the $2$-dimensional unit disk $D^2$ smoothly embedded in $N$ satisfying $c_0(T_{0,j}) \subset {\rm Int}\ D_{1,j}$. 

We can take (the image of) a smooth curve $t_{p_0,j}$ connecting $p_0$ and a point in the interior of $D_{1,j}$ which is a smooth embedding in such a way that for distinct $j=j_1,j_2$ the intersection of the images are the one-point set $\{p_0\}$ and we do so.
By a suitable piesewise homotopy from $c_0$, we can have a map $c_1$ born from an SSN fold map. Furtheremore, we may assume that the following properties are enjoyed. An exposition of this type is also presented in the proof of Main Theorem 2 of \cite{kitazawa5}.

\begin{itemize}
\item There exists a sufficiently small regular neighborhood $N(t_{p_0,j})$ of the image of $t_{p_0,j}$.
\item The connected component of ${c_0}^{-1}(D_{1,j} \bigcup N(t_{p_0,j}))$ containing $T_{0,j}$ as a subpolyhedron, denoted by $C_{{c_0}^{-1}, D_{0,j}, T_{0,j}}$, is in ${c_1}^{-1}({\rm Int}\ D_{p_0})$, for each $j$.
\item ${c_1}^{-1}({\rm Int}\ D_{p_0})$ is in the union ${\bigcup}_{j \in J} {c_0}^{-1}(N^{\prime}(t_{p_0,j}))$
where a suitable small regular neighborhood $N^{\prime}(t_{p_0,j})$ of $D_{1,j} \bigcup N(t_{p_0,j})$ is chosen for each $j$: ${\rm Int}\ N^{\prime}(t_{p_0,j}) \supset D_{1,j} \bigcup N(t_{p_0,j})$ holds of course.
\item The family  $\{c_1 {\mid}_{T_{0,j}}\}_{j \in J}$ of the restrictions is smoothly isotopic to the family $\{c_0 {\mid}_{T_{0,j}}\}_{j \in J}$ of the restrictions.
\item The piesewise smooth homotopy is regarded as a map $F_{c_0,c_1}$ such that on some small regular neighborfood $N(C_{{c_0}^{-1},D_{0,j}, T_{0,j}})$ of $C_{{c_0}^{-1},D_{0,j}, T_{0,j}}$
 there exists a smooth isotopy ${\Phi}_{j,c_0,c_1}:Y \times [0,1] \rightarrow Y$ making the relation $F_{c_0,c_1}(x,t)={\Phi}_{j,c_0,c_1}(c_0(x),t)$. Furthermore, in the deformation by this smooth homotopy, at points outside the union ${\bigcup}_{j \in J} N(C_{{c_0}^{-1},D_{0,j}, T_{0,j}})$ of the small regular neighborhoods, the values are fixed.
\end{itemize}
A new important ingredient is, to respect Definition \ref{def:10} and the condition on the image $c_0(T_{0,j}) \subset {\rm Int}\ D_{0,j}$. These assumptions and the definition also allow us to change the map $c_1$ by a piesewise smooth homotopy from $c_1$ to $c$ suitably to apply Main Theorem \ref{mthm:1} in the last. Note that the existence of the premap for the union of $B(c)$ and the disjoint union ${\sqcup}_{j \in J} T_j$ giving a smooth immersion having normal crossings only as crossings is due to arguments on so-called {\it generic} properties, discussed as fundamental arguments on singularity theory of differentiable maps and differential topology for example. This is essential in the definition of a stable map, presented shortly in the first section. See \cite{golubitskyguillemin} again. In \cite{kitazawa5}, this is discussed shortly. 
We can also deform the map to obtain $c$ only around $D_{p_0}$ and ${c_1}^{-1}(D_{p_0})$ by Definition \ref{def:10} with the condition $c_0(T_{0,j}) \subset {\rm Int}\ D_{0,j}$. In the deformation by a pisewise smooth homotopy from $c_1$ to $c$, at points outside the union ${\bigcup}_{j \in J} N(C_{{c_0}^{-1},D_{0,j}, T_{0,j}})$, the values are fixed as before.
This completes the proof.
\end{proof}
\begin{proof}[A proof of Main Theorems \ref{mthm:3}.]
We can prove Main Theorem \ref{mthm:3} as a specific case of Main Theorems \ref{mthm:1} and \ref{mthm:2} in the present paper. 

We can prove the former part or the application of (a suitably revised version of) Main Theorem \ref{mthm:2}, presented before, in a style similar to that of our proof of our Main Theorem \ref{mthm:2}. Note that $T_j$ is assumed to be the boundary of some copy of the $2$-dimensional unit disk embedded as a subpolyhedron by ({\rm C}\ref{thm:3C3}). We also need ({\rm  C}\ref{thm:3C1}), ({\rm C}\ref{thm:3C2}) and ({\rm C}\ref{thm:3C6}). 

The remaining part is shown in a style similar to that of the proof of Main Theorem 2 of \cite{kitazawa5} and we give its proof.


We explain about the resulting polyhedron $P^{\prime}$ and a new $3$-dimensional closed and connected manifold where we can embed this as a subpolyhedron. 



By removing the interiors of the $l$ disjoint subpolyhedra PL homeomorphic to the $3$-dimensional unit disk $D^3$ from a copy of the $3$-dimensional unit sphere $S^3$ (with the canonical PL structure), we have a manifold ${S^3}_{(l)}$.
We can see that the surface $S_C$ can be embedded in ${S^3}_{(l)}$ as a subpolyhedron enjoying the following properties.
\begin{itemize}
	\item The boundary $\partial S_C$ is embedded in the boundary $\partial {S^3}_{(l)}$. 
	\item Distinct connected components of the boundary $\partial S_C$ are embedded in distinct connected components of the boundary $\partial {S^3}_{(l)}$.
	\item The interior ${\rm Int}\ S_C$ is embedded in the interior ${\rm Int}\ {S^3}_{(l)}$.
\end{itemize}

We explain about the new desired $3$-dimensional manifold where $P^{\prime}$ can be embedded as a subpolyhedron. We can choose small $l$ disjoint subpolyhedra PL homeomorphic to the $3$-dimensional unit disk $D^3$ with the canonically defined PL structure in $X_g$. We take such a family, denoted by $\{{D^3}_j\}_{j \in J}$.
Furthermore, we can take this enjoying the following properties and we take this respecting the properties. 
Each $D_j$ is assumed to be a normally embedded disk with respect to $c_0$ and disks in the given family $\{D_j\}_{j \in J}$ are mutually disjoint. 

\begin{itemize}
	\item The boundary $c_0(\partial D_j)$ is embedded into the boundary $\partial {D^3}_j$ via a piesewise smooth embedding.
	\item The interior $c_0({\rm Int}\ D_j)$ is embedded into the interior ${\rm Int}\ {D^3}_j$ via a piesewise smooth embedding.
\end{itemize}
For each $D_j$, $D_j \bigcap B(c_0)$ is assumed to be empty or a closed interval whose boundary is embedded into the boundary $\partial D_j$ and whose interior is embedded into the interior by a piesewise smooth embedding. Due to this assumption, via a small piesewise smooth isotopy we can deform the original embedding of $P$ obeying the following conditions where we do not know about (suitable) extensions of this to general cases.
\begin{itemize}
	\item In deforming the original embedding, we fix all points in the complementary set of the disjoint union ${\sqcup}_{j \in J} {\rm Int}\ D_j$ of the interiors of the disks in $\{D_j\}_{j \in J}$.
	\item After the deformation, the disjoint union ${\sqcup}_{j \in J} {\rm Int}\ D_j$ of the interiors of the disks in $\{D_j\}_{j \in J}$ is moved outside the disjoint union ${\sqcup}_{j \in J} {D^3}_j$ of the copies of the disk in $\{{D^3}_j\}_{j \in J}$.
\end{itemize}
See also FIGURE \ref{fig:0.1}.
\begin{figure}
	\includegraphics[width=50mm]{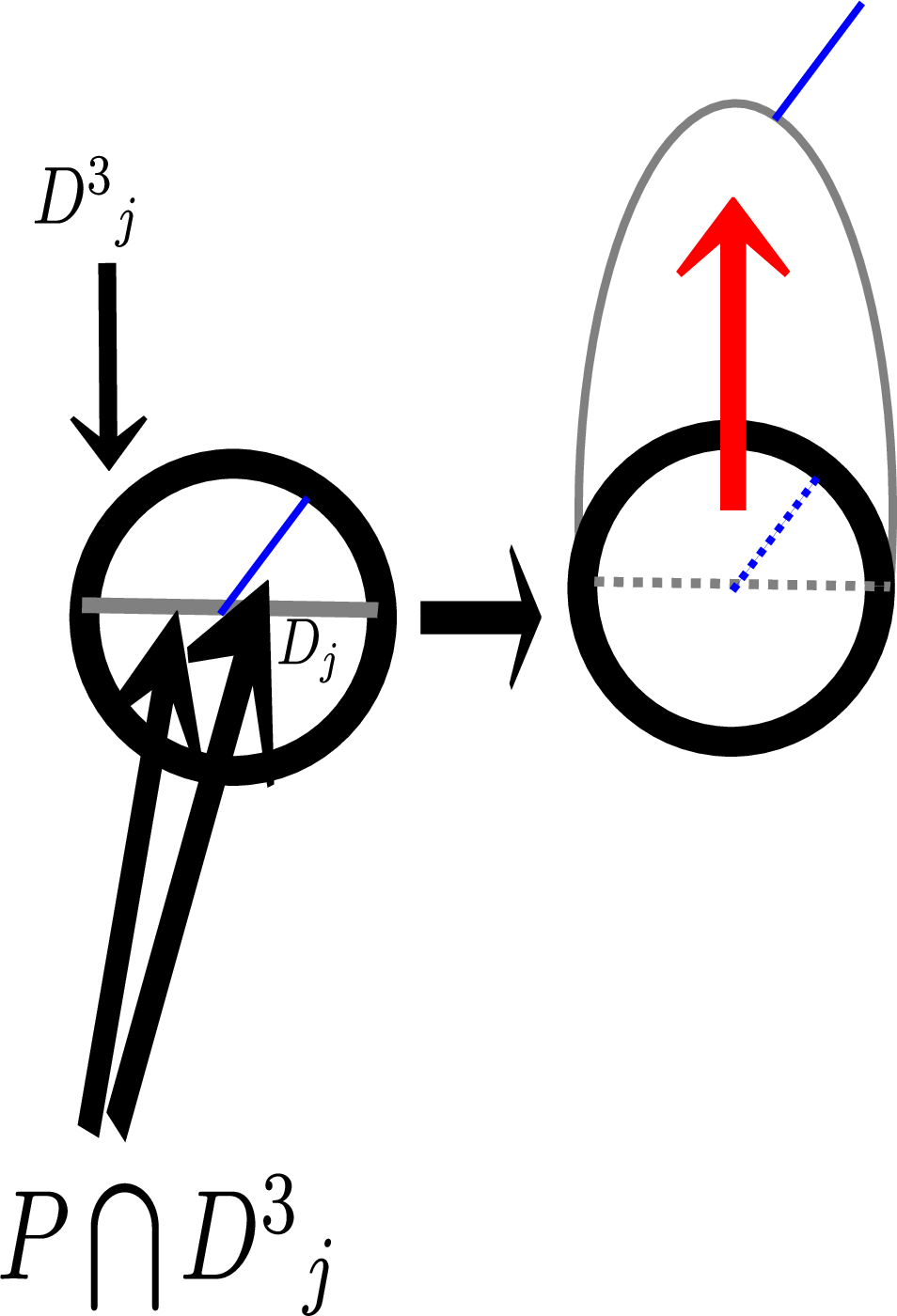}
	\caption{The case $D_j \bigcap B(c_0)$ is assumed to be a closed interval whose boundary is embedded into the boundary $\partial D_j$ and whose interior is embedded into the interior by a piesewise smooth embedding: $D_j \subset {D^3}_j$ is represented by the gray segment and $P \bigcap {D^3}_j \subset {D^3}_j$ is represented by the union of the gray segment and the blue segment. The deformation of the original embedding by a piesewise smooth isotopy: the red arrow shows this.}
	\label{fig:0.1}
\end{figure}
We remove the interiors of the $3$-dimensional subpolyhedra of $X_g$ in the family $\{{D^3}_j\}_{j \in J}$. After that we attach ${S^3}_{(l)}$ by a piesewise smooth homeomorphism along the boundaries such that the restriction to the boundary $\partial S_C$ is a piesewise smooth homeomorphism onto the subpolyhedron of $P$ to obtain $P^{\prime}$. 

 The resulting $3$-dimensional manifold can be also regarded as a manifold (PL) homeomorphic and diffeomorphic to the manifold $X_{g+l-1}$ obtained by a finite iteration of the following procedures starting from $X_g$.

\begin{itemize}
	\item Choose small two disjoint subpolyhedra PL homeomorphic to the $3$-dimensional unit disk $D^3$ (with the canonically defined PL structure) in the $3$-dimensional closed and connected manifold $X_{g^{\prime}}$.
	\item Attach a manifold PL homeomorphic $S^2 \times D^1$ to the boundary via piesewise smooth homeomorphism between the boundaries in such a way that the resulting manifold is a $3$-dimensional closed and connected manifold $X_{g^{\prime}+1}$. It can be represented as a connected sum of $X_{g^{\prime}}$ and a copy of $S^1 \times S^2$ or the total space of a non-trivial smooth bundle over $S^1$ whose fiber is diffeomorphic to $S^2$. We can obtain both manifolds. This is due to 
	what follows.
	 On two copies of the $2$-dimensional unit sphere $S^2$, consider a circle embedded as a subpolyhedron in each copy. Suppose that a circle is mapped onto another circle by a piesewise smooth homeomorphism. Then this is extended to both orientation preserving and reversing piesewise smooth homeomorphisms between the given copies of $S^2$ where arbitrary orientations are given.    
\end{itemize}

By these arguments, we can see that $P^{\prime}$ can be embedded in the resulting manifold as a subpolyhedron.

This completes the proof.
\end{proof}

\begin{proof}[A proof of Main Theorem \ref{mthm:4}]
We prove (\ref{mthm:4.1}) first.

From the map $c_0 {\mid}_{D_{0}}$ where $D_{0}$ is a disk giving the graph $G_{0}$ such that all our graphs in $\{G_j\}_{j \in J}$ are subgraphs, we obtain the suitable orientation for the connected component of $P-B(c_0)$ represented by each vertex of the graph $G_{0}$. Moreover, these orientations are canonically induced from the map $c_0 {\mid}_{D_{i_0}}$ and a suitable orientation of $N$. We can also orient the disk $D_j$ canonically by the definitions and properties of the disks $D_j$ and $D_{0}$ and the graphs $G_j$ and $G_0$. 


By the definitions and properties of the disks $D_j$ and $D_{0}$ and the graphs $G_j$ and $G_0$, we can connect $D_{0}$ and each $D_j$ by a smooth curve $t_{D_j}:[0,1] \rightarrow P-B(c_0)$ which is an embedding and maps $(0,1)$ to $P-(B(c_0) \bigcup ({\sqcup}_{j \in J} D_j) \bigcup D_0)$ for $j \in J$ where $J$ is the set defined in (\ref{mthm:4.2}) after presenting (\ref{mthm:4.1}). We can take these curves in such a way that the images of distinct curves are always disjoint. As a small regular neighborhood of the union of the image of the disjoint union ${\sqcup}_{j \in J} t_{D_j}$ of the curves and $D_0 \bigcup {\sqcup}_{j \in J} D_j$, we have a new $2$-dimensional manifold $N(D)$ which is PL homeomorphic to the $2$-dimensional unit disk. We can see that we can induce the orientation of $N(D)$ from $N$ canonically and that this also gives the orientations of each $D_j$ and $D_0$ given before canonically.


Here we may regard that each $T_j$, regareded as the boundary of the disk $D_j$, is mapped into a sufficiently small copy of the $2$-dimensional unit disk $D^2$ smoothly embedded in $N$ by $c_0$ as in the situation of Main Theorem \ref{mthm:2}. We may also regard that
the disjoint union ${\sqcup}_{j \in J} T_j$ is mapped into another sufficiently small copy of the $2$-dimensional unit disk $D^2$ smoothly embedded in $N$ by $c_0$.
This with the condition (C\ref{thm:4C6}) on smoothly isotopic families allows us to apply the method of the proof of Main Theorem \ref{mthm:2} which is revised suitably, for example. We apply the construction of the maps via piesewise smooth homotopies in the proofs of our Main Theorem \ref{mthm:2} (, Main Theorem \ref{mthm:1}, or \ref{mthm:3} in specific cases,) and fundamental arguments on the orientations of the surfaces.
$N(D)$ is, after the new compact, connected and orientable surface $S_C$ in ({\rm C}\ref{thm:4C6}) is attached in our situation, regarded as a subpolyhedron containing a compact and non-orientable surface as a subpolyhedron. 
Such a non-orientable surface is obtained by removing the interiors of the disks in the family $\{D_j\}_{j \in J}$ from this subpolyhedron, for example. This completes the proof of (\ref{mthm:4.1}).

We prove (\ref{mthm:4.2}).

We orient $N$ as before. Hereafter, we may assume that $S_P$ is orientable to prove this.
We can orient $S_P$ in such a way that the orientation of the connected component $v_1$ of $P-B(c_0)$, which is also a subset of $S_P$, is the orientation induced canonically from $N(D)$ and $N$ as in the beginning.
For $j \leq {i}_{1}$, the orientation of $v_j$ induced canonically from $N(D)$ and $N$ and the orientation induced canonically from the oriented surface $S_P$ are same.

We compare the orientation of the connected component $v_{i^{\prime}_1}$ of $P-B(c_0)$ induced canonically from $N(D)$ and $N$ to the orientation induced canonically from the oriented surface $S_P$. Note that  the connected component $v_{i^{\prime}_1}$ of $P-B(c_0)$ is also a subset of $S_P$. These orientations are mutually distinct, mainly due to the assumption on the non-existence of the graph $G_{c_0,D_{c}}$ associated with $(c_0,D_{c})$ for any normally embedded disk $D_{c_0}$ with respect to $c_0$, which is in the property (\ref{mthm:4.2.2.2.2}). See also FIGURE \ref{fig:0.2}.

\begin{figure}
	\includegraphics[width=50mm]{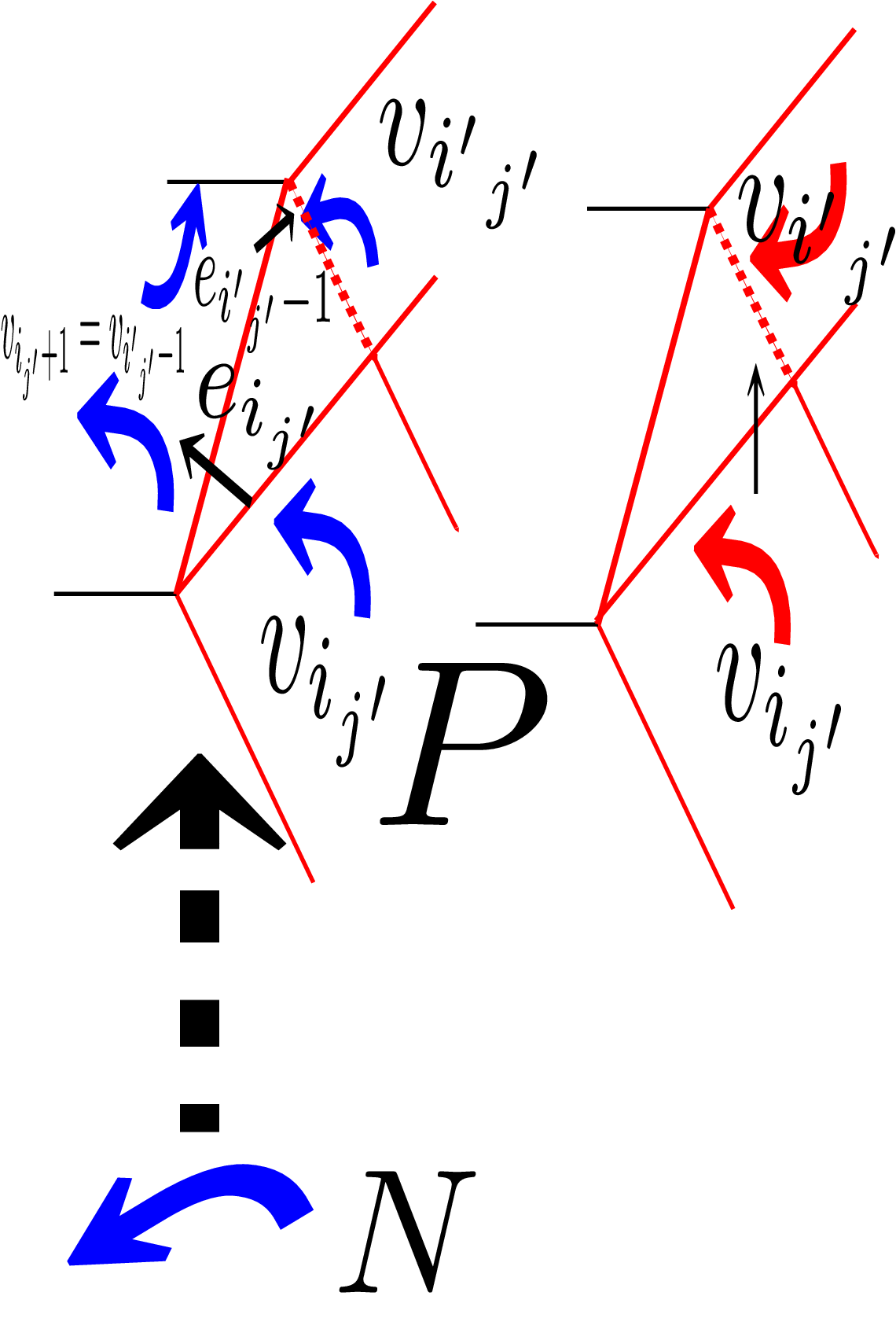}
	\caption{The left figure is for orientations of each region or connected component of $P-B(c_0)$ (, $P-B(c)$, or $P-B(c^{\prime})$) and each disk in the family $\{D_j\}_{j \in J}$ induced from $D_0$, $N(D)$ and $N$. Here $N$ is oriented (suitably). The right figure is for orientations of each region and each disk in the family $\{D_j\}_{j \in J}$ induced from a surface $S_P$, which is oriented (suitably). In these two figures, regions surrounded by the red segments are subpolyhedra of the surface $S_P$. For an edge, the notation $e_j:=p_j([t_j,t_{j+1}])$ is used as in the property (\ref{mthm:4.2.2.2.2}).}
	\label{fig:0.2}
\end{figure}

For $i^{\prime}_1 \leq j \leq {i}_{2}$, the orientation of $v_j$ induced canonically from $N(D)$ and $N$ and the orientation induced canonically from $S_P$ are distinct.

The orientation of the connected component $v_{i^{\prime}_2}$ of $P-B(c_0)$ induced canonically from $N(D)$ and $N$ is same as the orientation induced canonically from $S_P$. Note that the connected component $v_{i^{\prime}_2}$ of $P-B(c_0)$ is also a subset of $S_P$. See also FIGURE \ref{fig:0.2} again.

$l^{\prime}$ is even and positive. 
We apply a kind of inductions. By fundamental arguments on orientations of surfaces, which are keys in the proof of (\ref{mthm:4.1}), we have a desired closed adn connected non-orientable surface which is also a subpolyhedron of $P^{\prime}$. This surface is obtained by attaching the surface $S_C$ to $S_P$ and removing the interiors of all disks in the family ${\{D_j\}}_{j \in J} \subset S_P$.

This completes the proof of (\ref{mthm:4.2}).

This completes the proof.
\end{proof}



We present an example for Main Theorem \ref{mthm:2} or \ref{mthm:3}.

\begin{Ex}
	\label{ex:1}
	In Main Theorem \ref{mthm:2}, if $c_0 {\mid}_{T_{0,j}}$ is regarded as the restriction of the map $c_0 {\mid}_{D_j}$ on a copy $D_j$ of the $2$-dimensional unit disk $D^2$ embedded as a subpolyhedron in $P$, all disks in $\{D_j\}_{j \in J}$ are disjointly embedded, and the restriction $c_0 {\mid}_{{\sqcup}_{j \in J} D_j}$ is regarded as a piesewise smooth embedding, then we can apply Main Theorem \ref{mthm:2} by taking $S_C$ as a compact, connected and orientable surface of genus $0$. If each $D_j$ is in some connected component of $P-B(c_0)$, then we can apply Main Theorem \ref{mthm:3}. 
	\end{Ex}

We also present an example for Main Theorem \ref{mthm:4}. Remember that Proposition \ref{prop:2} gives a fundamental tool in constructing SSN fold maps, for example.

Here we also regard ${\mathbb{R}}^k$ as the natural vector space. 
Each point is identified with a vector canonically.
For two elements $x_1,x_2 \in {\mathbb{R}}^k$, we consider the vector $x_1-x_2 \in {\mathbb{R}}^k$ defined by considering the difference.

\begin{Ex}
\label{ex:2}
We can construct an SSN fold map $f:M \rightarrow {\mathbb{R}}^2$ enjoying the following properties on a suitable $m$-dimensional closed and connected manifold $M$ into ${\mathbb{R}}^2$ for any $m>2$.
\begin{enumerate}
\item $f {\mid}_{S(f)}$ is an embedding.
\item $f(S(f))=\{x \in {\mathbb{R}}^2 \mid ||x||=1,2,8,9,10,11\}$.
\item The index of each singular point in the preimage of $\{x \in {\mathbb{R}}^2 \mid ||x||=l\}$ is always $0$ for $l=9,11$ and $1$ for $l=1,2,8,10$.
\item The number of connected components of the preimage of $\{x \in {\mathbb{R}}^2 \mid l<||x||<l+1\}$ is $11-l$ for $l=8,9,10$, $4$ for $l=0$ and $5$ for $l=1$. The number of connected components of the preimage of $\{x \in {\mathbb{R}}^2 \mid 2<||x||<8\}$ is $4$.
\end{enumerate}

See FIGURE \ref{fig:1}. This is a so-called {\it round} fold map, defined first in \cite{kitazawa0.1,kitazawa0.2,kitazawa0.3} by the author. See also \cite{kitazawa0.4,kitazawa0.5}. We can construct this such that the Reeb space is as in FIGURE \ref{fig:3}. We construct the fold map so that the Reeb space can be embedded into ${\mathbb{R}}^3$ and that there exists a PL embedding $F$ satisfying the following properties.
\begin{enumerate}
\item $F(W_f)$ is the union of the following sets.
\begin{enumerate}
\item $\{x \in {\mathbb{R}}^3 \mid ||x||=8,10\}$.
\item $\{(r\cos u,r\sin u,0) \in {\mathbb{R}}^3 \mid 8 \leq r \leq 9,10 \leq r \leq 11, u \in \mathbb{R}\}$.
\item All segments whose boundaries are of the form $\{(2\cos u,2\sin u,u_{\rm u}) \in S_{\rm u},(\cos u,\sin u,u_{\rm d}) \in S_{\rm d}\}$ for $u \in \mathbb{R}$ where $S_{\rm u}:=\{(x_1,x_2,x_3) \in {\mathbb{R}}^2 \times \mathbb{R} \mid ||(x_1,x_2,x_3)-(0,0,x_3)||=2, x_3>0\} \bigcap \{x \in {\mathbb{R}}^3 \mid ||x||=10\}$ and $S_{\rm d}:=\{(x_1,x_2,x_3) \in {\mathbb{R}}^2 \times \mathbb{R} \mid ||(x_1,x_2,x_3)-(0,0,x_3)||=1, x_3>0\} \bigcap \{x \in {\mathbb{R}}^3 \mid ||x||=8\}$.
\end{enumerate} 
\item $(F \circ q_f)(S(f))=\{x \in {\mathbb{R}}^2 \times \{0\} \mid ||x||=8,9,10,11\} \sqcup S_{\rm u} \sqcup S_{\rm d}$.
\end{enumerate}
FIGURE \ref{fig:3} presents the intersection of the image of the Reeb space and $\mathbb{R} \times \{0\} \times \mathbb{R}$.
We can construct a new fold map $f^{\prime}$ enjoying the following properties on a suitable $m$-dimensional closed and connected manifold $M^{\prime}$ by removing the preimage of $\{x \in {\mathbb{R}}^2 \mid ||x-(-5,0)|| \leq \frac{5}{2}.\}$ and gluing a suitable new smooth map instead. This map is presented in FIGURE \ref{fig:2}. 
\begin{enumerate}
\item $f^{\prime} {\mid}_{S(f^{\prime})}$ is an embedding.
\item $f^{\prime}(S(f^{\prime}))=\{x \in {\mathbb{R}}^2 \mid ||x||=1,2,8,9,10,11\} \sqcup \{x \in {\mathbb{R}}^2 \mid ||x-(-5,0)||=1,2\}$.
\item The index of each singular point in the preimage of $\{x \in {\mathbb{R}}^2 \mid ||x||=l\}$ is always $0$ for $l=9,11$ and $1$ for $l=1,2,8,10$. The index of each singular point in the preimage of $\{x \in {\mathbb{R}}^2 \mid ||x-(-5,0)||=1,2\}$ is $1$.
\item The number of connected components of the preimage of $\{x \in {\mathbb{R}}^2 \mid l<||x||<l+1\}$ is $11-l$ for $l=8,9,10$, $4$ for $l=0$ and $5$ for $l=1$. The number of connected components of the preimage of each connected component of $\{x \in {\mathbb{R}}^2 \mid 2<||x||<8\}-\{x \in {\mathbb{R}}^2 \mid 1 \leq ||x-(-5,0)|| \leq 2\}$ is $4$. The
number of connected components of the preimage of $\{x \in {\mathbb{R}}^2 \mid 1<||x-(-5,0)||<2\}$ is $5$. 
\end{enumerate}
We can also construct $f^{\prime}$ such that the Reeb space $W_{f^{\prime}}$ can be embedded into ${\mathbb{R}}^3$ by a piesewise smooth embedding $F^{\prime}$ enjoying the following properties.
\begin{enumerate}
\item $F^{\prime}(W_{f^{\prime}})$ is the union of the following sets.
\begin{enumerate}
\item $\{x \in {\mathbb{R}}^3 \mid ||x||=8,10\}$.
\item $\{(r\cos u,r\sin u,0) \in {\mathbb{R}}^3 \mid 8 \leq r \leq 9,10 \leq r \leq 11, u \in \mathbb{R}\}$.
\item All segments whose boundaries are of the form $\{(2\cos u,2\sin u,u_{\rm u}) \in S_{\rm u},(-5+\cos u,\sin u,u_{\rm d}) \in S_{\rm d}\}$ for $u \in \mathbb{R}$.
\item All segments whose boundaries are of the form $\{(-5+2\cos u^{\prime},2\sin u^{\prime},{u^{\prime}}_{\rm u}) \in {S^{\prime}}_{\rm u},(-5+\cos u^{\prime},\sin u^{\prime},{u^{\prime}}_{\rm d}) \in {S^{\prime}}_{\rm d}\}$ for $u^{\prime} \in \mathbb{R}$ where ${S^{\prime}}_{\rm u}:=\{(x_1,x_2,x_3) \in {\mathbb{R}}^2 \times \mathbb{R} \mid ||(x_1,x_2,x_3)-(-5,0,x_3)||=2, x_3>0\} \bigcap \{x \in {\mathbb{R}}^3 \mid ||x||=8\}$ and ${S^{\prime}}_{\rm d}:=\{(x_1,x_2,x_3) \in {\mathbb{R}}^2 \times \mathbb{R} \mid ||(x_1,x_2,x_3)-(-5,0,x_3)||=1, x_3<0\} \bigcap \{x \in {\mathbb{R}}^3 \mid ||x||=10\}$.
\end{enumerate}
\item $(F^{\prime} \circ q_{f^{\prime}})(S(f^{\prime}))=\{x \in {\mathbb{R}}^2 \times \{0\} \mid ||x||=8,9,10,11\} \sqcup S_{\rm u} \sqcup S_{\rm d} \sqcup {S^{\prime}}_{\rm u} \sqcup {S^{\prime}}_{\rm d}$. 
\end{enumerate}
We can say that $W_{f^{\prime}}$ and $q_{f^{\prime}}$ are obtained by Main Theorem \ref{mthm:1} from $W_f$ and $q_f$. $W_{f^{\prime}}$ contains a closed, connected and non-orientable surface as a subpolyhedron. In FIGURE \ref{fig:3} we can see this by considering the union of the $2$-dimensional surface $S_P$ represented by gray curves and segments and the newly attached copy of $S^1 \times [-1,1]$ and removing the interiors of two suitable disks embedded as subpolyhedra here. The resulting closed surface is (PL) homeomorphic to the Klein Bottle. 

We can also apply Main Theorem \ref{mthm:4} for the two disjointly embedded disks here. 
We can define these disks as
${D^{\prime}}_{\rm u}:=\{(x_1,x_2,x_3) \in {\mathbb{R}}^2 \times \mathbb{R} \mid ||(x_1,x_2,x_3)-(-5,0,x_3)|| \leq 2, x_3>0\} \bigcap \{x \in {\mathbb{R}}^3 \mid ||x||=8\}$ and ${D^{\prime}}_{\rm d}:=\{(x_1,x_2,x_3) \in {\mathbb{R}}^2 \times \mathbb{R} \mid ||(x_1,x_2,x_3)-(-5,0,x_3)|| \leq 1, x_3<0\} \bigcap \{x \in {\mathbb{R}}^3 \mid ||x||=10\}$.
We have graphs in Main Theorem \ref{mthm:4} (\ref{mthm:4.2}). These disks are normally embedded disks with respect to the map $\bar{f}:W_f \rightarrow {\mathbb{R}}^2$ and we set these disks as $D_{j_1}$ and $D_{j_2}$. The graph associated to the pair of the map $\bar{f}$ and each disk here is a graph consisting of exactly one vertex. We have a normally embedded disk $D_0$ with respect to the map $\bar{f}:W_f \rightarrow {\mathbb{R}}^2$ such that these two graphs are subgraphs and we have a situation of Main Theorem \ref{mthm:4} (\ref{mthm:4.2}) for $l^{\prime}=2$. We also have a case for $l^{\prime}=4$.
\end{Ex}
\begin{center}
\begin{figure}
\includegraphics[width=35mm]{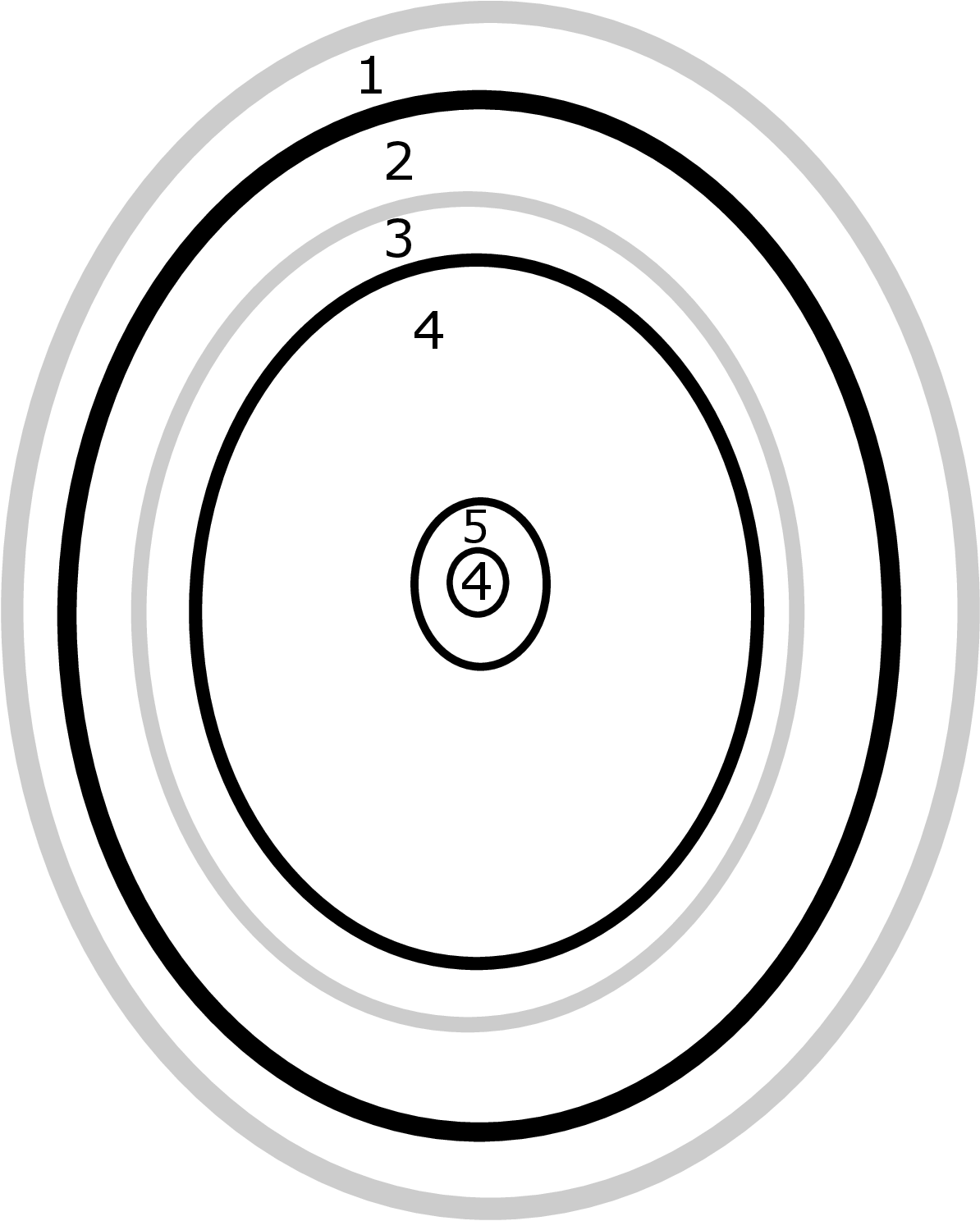}
\caption{The image of $f$. Circles are for the singular value set. Numbers stand for the numbers of connected components of the preimages for points in the regular value set. For connected components represented by black circles, the indices of singular points in the preimages are always $0$ and for ones represented by gray circles the indices of singular points in the preimages are always $1$.}
\label{fig:1}
\end{figure}
\begin{figure}
\includegraphics[width=35mm]{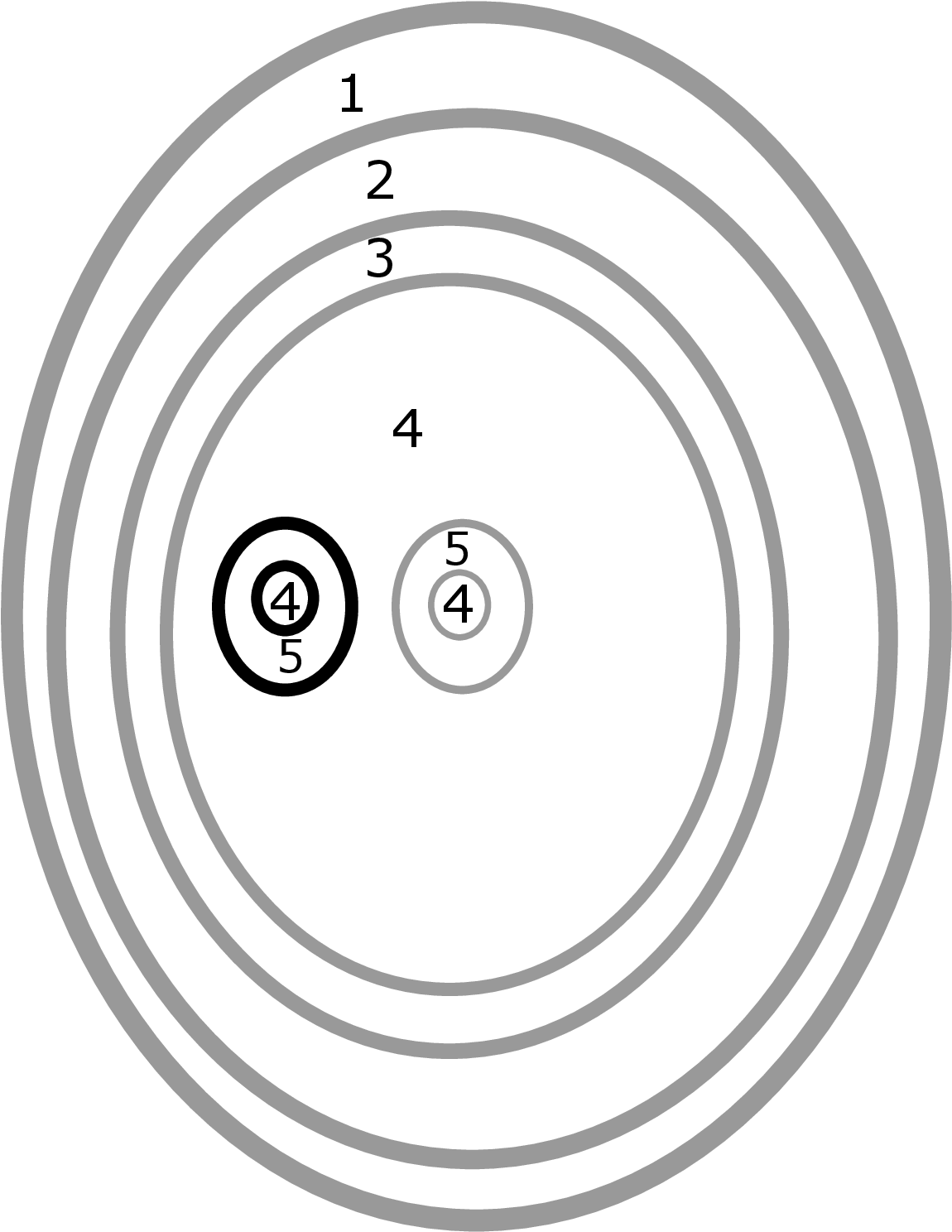}
\caption{The image of $f^{\prime}$. Circles are for the singular value set. Numbers stand for the numbers of connected components of the preimages for points in the regular value set. For connected components represented by black circles, the indices of points in the preimages are always $0$ and for ones represented by gray circles they are $1$.}
\label{fig:2}
\end{figure}
\begin{figure}
\includegraphics[width=60mm]{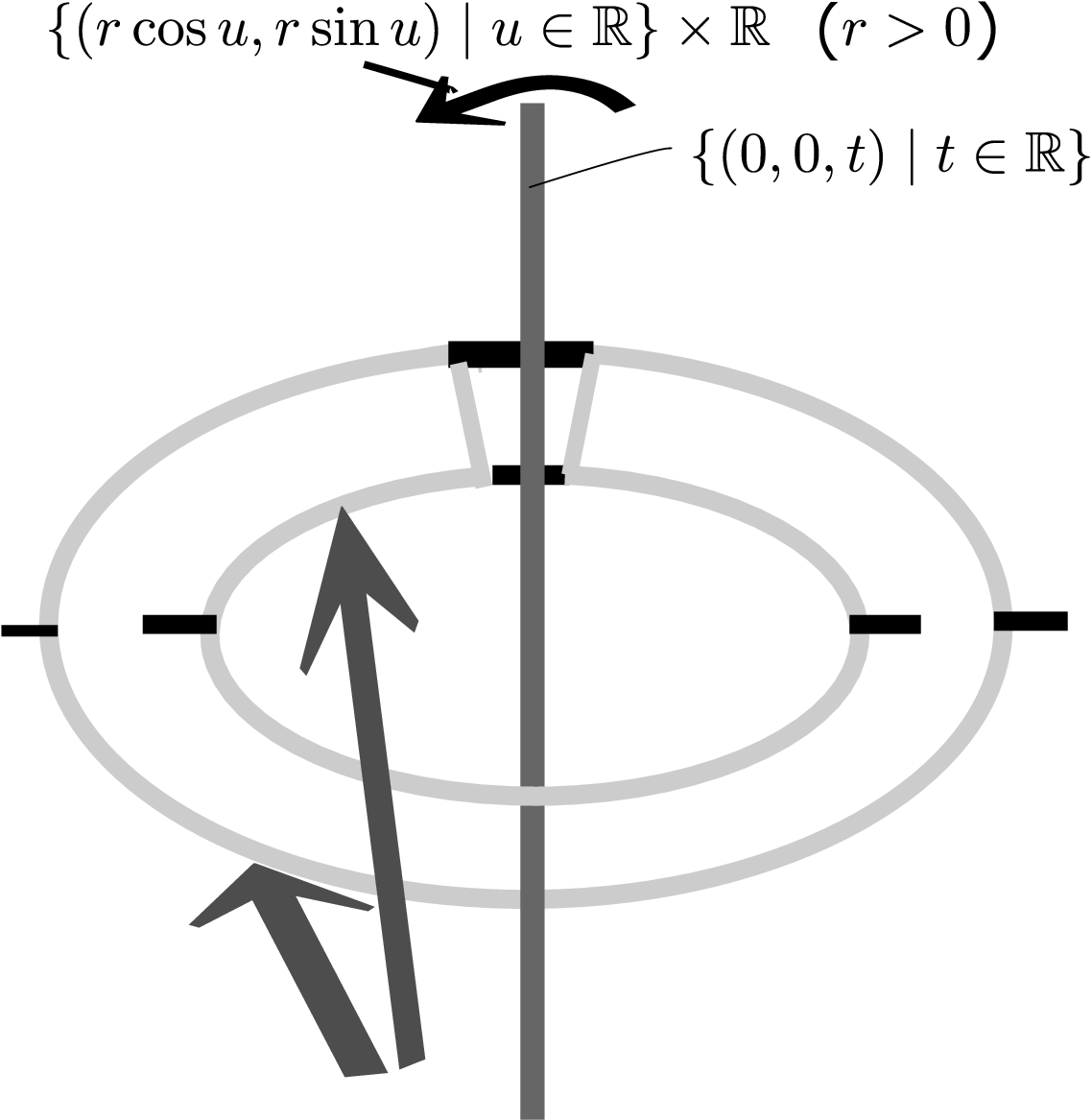}
\caption{The images of $F$ and $F^{\prime}$. A manifold PL homeomorphic to $S^1 \times [-1,1]$ is attached to the boundaries of two disks located around the two points indicated by arrows by a suitable PL homeomorphism.}
\label{fig:3}
\end{figure}
\end{center}
\section{Acknowledgment and data availability.}
The author is a member of JSPS KAKENHI Grant Number JP17H06128 "Innovative research of geometric topology and singularities of differentiable mappings" (Principal Investigator: Osamu Saeki) and the present work is supported by this. 

The present study is also related to a joint research project at Institute of Mathematics for Industry, Kyushu University (20200027), ''Geometric and constructive studies of higher dimensional manifolds and applications to higher dimensional data'', principal investigator of which is the author. The author would like to thank people supporting our new research project.
This is a kind of projects applying geometric theory on higher dimensional differentiable manifolds developing through studies of the author to higher dimensional data analysis and visualizations.


The author declares that data essentially supporting our present study are all in the present paper. 


\begin{thebibliography}{30}
\bibitem{bryant} J. L. Bryant, \textsl{Piecewise linear topology}, https://www.maths.ed.ac.uk/~v1ranick/papers/pltop.pdf.
\bibitem{golubitskyguillemin} M. Golubitsky and V. Guillemin, \textsl{Stable mappings and their singularities}, Graduate Texts in Mathematics (14), Springer-Verlag (1974).
\bibitem{hempel} J. Hempel, \textsl{3-Manifolds}, AMS Chelsea Publishing, 2004. 
\bibitem{hirsch} Morris W. Hirsch, \textsl{Smooth regular neighborhoods}, Annals of Mathematics Second Series, Vol. 76, No. 3 (Nov., 1962), pp. 524--530.
\bibitem{hudson} J. F. P. Hudson, \textsl{Piece Linear Topology}, W. A. Benjamin, New York, 1969.
\bibitem{ikeda} H. Ikeda, \textsl{Acyclic fake surfaces}, Topology 10. (1971), 9--36.
\bibitem{kitazawa0.1} N. Kitazawa, \textsl{On round fold maps} (in Japanese), RIMS Kokyuroku Bessatsu B38 (2013), 45--59.
\bibitem{kitazawa0.2} N. Kitazawa, \textsl{On manifolds admitting fold maps with singular value sets of concentric spheres}, Doctoral Dissertation, Tokyo Institute of Technology (2014).
\bibitem{kitazawa0.3} N. Kitazawa, \textsl{Fold maps with singular value sets of concentric spheres}, Hokkaido Mathematical Journal Vol.43, No.3 (2014), 327--359.
\bibitem{kitazawa0.4} N. Kitazawa, \textsl{Constructions of round fold maps on smooth bundles}, Tokyo J. of Math. Volume 37, Number 2, 385--403, arxiv:1305.1708.
\bibitem{kitazawa0.5} N. Kitazawa, \textsl{Round fold maps and the topologies and the differentiable structures of manifolds admitting explicit ones}, submitted to a refereed journal, arXiv:1304.0618 (the title has changed).%

\bibitem{kitazawa} N. Kitazawa, \textsl{Lifts of spherical Morse functions}, submitted to a refereed journal, arxiv:1805.05852.
\bibitem{kitazawa2} N. Kitazawa, \textsl{Surgery operations to fold maps to construct fold maps whose singular value sets may have crossings}, arxiv:2003.04147.
\bibitem{kitazawa3} N. Kitazawa, \textsl{Surgery operations to fold maps to increase connected components of singular sets by two}, arxiv:2004.03583.
\bibitem{kitazawa4} N. Kitazawa, \textsl{Explicit fold maps on 7-dimensional closed and simply-connected manifolds of new classes}, arxiv:2005.05281.
\bibitem{kitazawa5} N. Kitazawa, \textsl{Branched surfaces homeomorphic to Reeb spaces of simple fold maps}, submitted to a refereed journal, arxiv:2006.02037v10.
\bibitem{kobayashisaeki} M. Kobayashi and O. Saeki, \textsl{Simplifying stable mappings into the plane from a global viewpoint}, Trans. Amer. Math. Soc. 348 (1996), 2607--2636. 
%
\bibitem{matsuzakiozawa} S. Matsuzaki and M. Ozawa, \textsl{Genera and minors of multibranched surfaces}, Topology Appl. 230 (2017), 621--638.
\bibitem{moise} E E. Moise, \textsl{Affine Structures in $3$-Manifold{\rm :} V. The Triangulation Theorem and Hauptvermutung}, Ann. of Math., Second Series, Vol. 56, No. 1 (1952), 96--114.
\bibitem{munozozawa} M. E. Munoz and M. Ozawa, \textsl{The maximum and minimum genus of a multibranched surface}, Topology and its Appl. (2020) 107502, arXiv:2005,06765.
\bibitem{reeb} G. Reeb, \textsl{Sur les points singuliers d\`{u}ne forme de Pfaff completement integrable ou d’une fonction numerique}, -C. R. A. S. Paris 222 (1946), 847--849. 
\bibitem{ozawa} M. Ozawa, \textsl{Multibranched surfaces in $3$-manifolds}, J. Math. Sci. 255 (2021), 193--208, arXiv:2005,07409.
\bibitem{saeki} O. Saeki, \textsl{Notes on the topology of folds}, J. Math. Soc. Japan Volume 44, Number 3 (1992), 551--566.
\bibitem{saekisuzuoka} O. Saeki and K. Suzuoka, \textsl{Generic smooth maps with sphere fibers} J. Math. Soc. Japan Volume 57, Number 3 (2005), 881--902.
\bibitem{shiota} M. Shiota, \textsl{Thom's conjecture on triangulations of maps}, Topology 39 (2000), 383--399.
\bibitem{suzuoka} K. Suzuoka, \textsl{On the topology of sphere type fold maps of $4$-manifolds into the plane}, PhD Thesis, Univ. of Tokyo (March 2002).
\bibitem{thom} R. Thom, \textsl{Les singularites des applications differentiables}, Ann. Inst. Fourier (Grenoble) 6 (1955-56), 43--87.
\bibitem{turaev} V. G. Turaev, \textsl{Shadow links and face models of statistical mechanics}, J. Differential Geom. 36 (1992), 35--74.
\bibitem{whitney} H. Whitney, \textsl{On singularities of mappings of Euclidean spaces: I, mappings of the plane into the plane}, Ann. of Math. 62 (1955), 374--410.
\end{thebibliography}
\end{document}